\numberwithin{equation}{section}
\theoremstyle{plain}
\newtheorem{theorem}{Theorem}[section]
\newtheorem{lemma}[theorem]{Lemma}
\newtheorem*{lemma*}{Lemma}
\newtheorem{proposition}[theorem]{Proposition}
\newtheorem*{proposition*}{Proposition}
\theoremstyle{definition}
\newtheorem*{claim*}{Claim}
\theoremstyle{remark}
\newtheorem*{remarks}{Remarks}
\newtheorem*{remark}{Remark}
\def\supp{\mathrm{supp}\,}
\def\d{\mathrm{d}}
\def\ae{\textrm{a.e.}}
\def\C{\mathfrak C}
\def\D{\mathfrak{D}}
\def\I{\mathcal I}
\author{Chu-hee Cho}
\address[Chu-hee Cho] {Department of Mathematical Sciences and RIM, Seoul National University, Seoul 08826, Republic of Korea}
\email{akilus@snu.ac.kr}
\author{Shobu Shiraki}
\address[Shobu Shiraki] {Department of Mathematics, Graduate school of Science and Engineering, Saitama University, Saitama, 338-8570, Japan}
\email{sshiraki@mail.saitama-u.ac.jp}
\thanks{This work was supported by 
NRF grant no. 2020R1I1A1A01072942, 2022R1A4A1018904 (Republic of Korea) (Cho)
and 
JSPS KAKENHI Grant-in-Aid for JSPS Fellows no.  20J11851 (Shiraki).
}
\begin{document}
\date{\today}
\title[
Proceeding for RIMS
]
{
A note on some variations of the maximal inequality for the fractional Schr\"odinger equation
}

\keywords{}
\subjclass[2010]{}
\begin{abstract}
The purpose of this note is to provide a summary of the recent work of the authors on two variations of the pointwise convergence problem for the solutions to the fractional Schr\"odinger equations; convergence along a tangential line and along a set of lines, as exhibiting some new results in each setting.
For the former case, we make a simple observation on a path along a tangential curve of exponential order.
We discuss counterexamples for the latter case that show some of the known smooth regularities are essentially optimal.


\end{abstract}
\maketitle
\section{Introduction}

Let $d\in\mathbb N$ and $m>1$. 
On $\mathbb R^d\times \mathbb R$ the fractional Schr\"odinger equation is famously known as
\begin{equation}\label{e:Scrhodinger}
i\partial_t u + (-\Delta)^{\frac{m}2} u = 0
\end{equation}
for the initial data $u(\cdot,0) = f$, whose solution may be expressed (at least formally) as
\[
u(x,t)
=
S_t^mf(x)
=
\left(2\pi\right)^{-d} \int_{\mathbb R^d} e^{i(x\cdot \xi + t|\xi|^m)} \widehat{f}(\xi)\,\d\xi
\]
by using the Fourier transform given by $\widehat{f}(\xi)=\int_{\mathbb R^d} e^{-ix\cdot\xi} f(x)\,\d x$. 

The main object of our interest in this note is to determine the optimal regularity $s\geq0$ for which  the following local maximal-in-time estimate with respect  to time for the fractional Schr\"odinger equation holds; for some $q\geq1$, there exists a constant $C>0$ such that
\begin{equation}\label{i:max classical}
\|S_t^m f(x)\|_{L_x^q(\mathbb B^d)L_t^\infty(\mathbb I)}
\leq
C
\|f\|_{H^s(\mathbb R^d)}
\end{equation}
for all $f \in H^s(\mathbb R^d)$, defined by
\[
\|f\|_{H^s(\mathbb{R}^d)}=\|(1-\Delta)^\frac s2f\|_{L^2(\mathbb{R}^d)}=(2\pi)^{-\frac d2}\left(\int_{\mathbb R^d}(1+|\xi|^2)^s|\widehat{f}(\xi)|^2\,\d\xi\right)^\frac12.
\]
By locality, once we prove \eqref{i:max classical} for some $q_0\geq1$,  the inequality \eqref{i:max classical} (under the same conditions but) with $q$ smaller than $q_0$ is deduced by H\"older's inequality.
Of course, when the initial data is smooth enough, for instance, strictly smoother than $\frac d2$, the validity of \eqref{i:max classical} (with arbitrary $q$) follows immediately. In fact, a trivial computation reveals that for any $(x,t)\in \mathbb B^d\times \mathbb I$
\begin{align*}
|e^{it(-\Delta)^\frac m2}f(x)|
\lesssim
\int_{\mathbb R^d} |\widehat{f}(\xi)|\,\d\xi
\lesssim \left(\int_\mathbb R(1+r^2)^{-s}r^{d-1}\,\d r\right)^\frac12\|f\|_{H^s(\mathbb R^d)},
\end{align*}
which is finite whenever $s>\frac d2$. 
Moreover, this computation indicates that the oscillatory cancellation, completely ignored in the first step, may have a crucial role in order to go beyond the smooth regularity.

The maximal inequality \eqref{i:max classical} is motivated by the study of the pointwise convergence behavior of the solution to the fractional Schr\"odinger equation, sometimes referred to as Carleson's problem.  
Namely, if there exists $C>0$ such that \eqref{i:max classical} holds\footnote{The left-hand side of \eqref{i:max classical} can be weakened by replacing $L_x^1(\mathbb B^d)$ by the weak-type space $L_x^{q,\infty}(\mathbb B^d)$.}
for all $f\in H^s$ for some $q\geq1$ and $s\geq0$, then 
\begin{equation}\label{pw}
\lim_{t\to0}S_t^m f(x)=f(x)\quad \ae
\end{equation}
holds for all $f\in H^s$. 
The reduction is a similar spirit to the Lebesgue differentiation theorem (\cite{Duo}) and it does not lose much information. In fact, \eqref{i:max classical} conversely follows from the pointwise convergence \eqref{pw} provided the \emph{weak-type} maximal estimate for $q\in[1,2]$ via Nikishin--Stein maximal principle (see, for example, \cite{Pierce} for the details).
When the spatial dimension $d=1$, the problem is relatively easy and was completely solved in the 1980s.
\begin{theorem}[Carleson \cite{Cr80}, Dalberg--Kenig \cite{DK82}, Sj\"olin \cite{Sj87}, Kenig--Ponce--Vega \cite{KPV91}]\label{t:classical d=1}
Let $d=1$ and $m>1$.  Then, there exists $C>0$ such that \eqref{i:max classical} holds with $q=4$ for all $f\in H^s(\mathbb R)$ if and only if $s\geq\frac14$.
\end{theorem}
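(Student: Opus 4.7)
The proof splits into a sufficiency claim (the estimate is valid for $s\geq 1/4$) and a necessity claim (it fails for $s<1/4$), and I plan to address them in turn.

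For sufficiency my plan is first to apply a Littlewood--Paley decomposition in frequency and reduce to the single-scale bound
\[
\bigl\|\sup_{t\in\mathbb I}|S_t^m f_N(x)|\bigr\|_{L_x^4(\mathbb B^1)}
\lesssim
N^{1/4}\|f_N\|_{L^2(\mathbb R)}
\]
for each $f_N$ with $\widehat{f_N}$ supported in $\{|\xi|\sim N\}$. To handle the time supremum I would linearize by a measurable function $t:\mathbb B^1\to\mathbb I$, consider the operator $T_Nf(x)=S_{t(x)}^m f_N(x)$, and run a $TT^*$ argument. The kernel of $T_NT_N^*$ is
\[
K_N(x,y)
=
\int_{\mathbb R} e^{i((x-y)\xi+(t(x)-t(y))|\xi|^m)}\chi_N(\xi)^2\,\d\xi,
\]
and since for $m>1$ the phase in $\xi$ has a single non-degenerate critical point, van der Corput's lemma supplies the uniform pointwise bound $|K_N(x,y)|\lesssim N^{1/2}|x-y|^{-1/2}$. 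This is the kernel of a rescaled one-dimensional fractional integration, which maps $L^{4/3}(\mathbb R)\to L^4(\mathbb R)$; unwinding $TT^*$ then yields the desired $L^2\to L^4$ bound of norm $\lesssim N^{1/4}$ on $T_N$. A Littlewood--Paley square-function argument assembles the dyadic pieces, with the endpoint $s=1/4$ needing an extra refinement via a Kenig--Ruiz-type space-time estimate of the form $\|S_t^m f\|_{L^4_{t,x}}\lesssim \|D^{(m-2)/4}f\|_{L^2}$ combined with a maximal-in-time transfer.

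For necessity I would construct a Knapp-type counterexample. Take $\widehat{f_N}=\chi_{[N,\,N+N^{1/2}]}$, so that $\|f_N\|_{H^s}\simeq N^{s+1/4}$. The phase $x\xi+t|\xi|^m$ has a stationary point at $\xi=N$ exactly when $x=-tmN^{m-1}$, and on the frequency support the quadratic remainder $\tfrac{1}{2}tm(m-1)N^{m-2}(\xi-N)^2$ is $O(1)$ as long as $|t|\lesssim N^{1-m}$. For such $t$ the stationary-point relation places $x$ in an interval of length $\sim 1$ contained in $\mathbb B^1$, and on that interval the constructive interference produces $|S_{t(x)}^m f_N(x)|\gtrsim N^{1/2}$. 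Substituting into \eqref{i:max classical} forces $N^{1/2}\lesssim N^{s+1/4}$, hence $s\geq 1/4$.

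The main obstacle I anticipate is the endpoint $s=1/4$ in the sufficiency half: the frequency-localized $TT^*$ bound is clean, but a naive dyadic summation in $H^{1/4}$ via the triangle inequality costs a logarithm. Removing it is precisely the content of the refined Kenig--Ponce--Vega approach, where one must exploit genuine $L^4$-orthogonality between frequency scales rather than invoking Minkowski; once that is in place, the remainder of the argument is largely mechanical.
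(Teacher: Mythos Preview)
The paper does not prove this theorem; it is stated as a classical result with attributions to Carleson, Dahlberg--Kenig, Sj\"olin, and Kenig--Ponce--Vega, and no argument is supplied in the text. There is therefore no proof in the paper against which to compare your proposal.

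That said, your outline is a reasonable reconstruction of the classical arguments. The $TT^*$/van der Corput route to the single-scale bound $\|T_N\|_{L^2\to L^4}\lesssim N^{1/4}$ is standard, and the kernel estimate $|K_N(x,y)|\lesssim N^{1/2}|x-y|^{-1/2}$ is correct, though obtaining it uniformly in the measurable choice $t(\cdot)$ requires splitting into the regimes $|x-y|\lesssim N^{m-1}|t(x)-t(y)|$ and $|x-y|\gtrsim N^{m-1}|t(x)-t(y)|$ and combining second- and first-derivative van der Corput bounds rather than a single stationary-phase step. Your Knapp example for necessity is also correct and delivers the sharp threshold. Your diagnosis of the endpoint $s=\tfrac14$ as the delicate point is accurate; note, however, that the way Sj\"olin and Kenig--Ponce--Vega actually resolve it is not by summing a Littlewood--Paley decomposition more cleverly, but by proving the $L^4_xL^\infty_t$ bound directly via an oscillatory-integral/$TT^*$ argument that never decomposes in frequency, so the logarithmic loss you anticipate simply does not arise.
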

In higher dimensions, it turned out to be extremely difficult and one can find some historical contributions in \cite{Br95, Br13, Lee06, LR17, MYZ15, Vg88} for example. The breakthrough came with Bourgain's number theoretic counterexample for the standard Schr\"odinger equation in 2016 \cite{Br16} (see also an expository summary of his argument \cite{Pierce} due to Pierce and an alternative proof \cite{LR19} due to Luc\`{a}--Rogers) Thanks to the strong connections of Carleson's problem with other crucial conjectures such as Stein's restriction conjecture and Kakeya maximal conjecture, soon later Du--Guth--Li \cite{DGL17} and Du--Zhang \cite{DZ18} applied state-of-the-art tools in harmonic analysis, such as multilinear restriction theorem, decoupling inequality, polynomial partitioning, refined Strichartz estimates, and showed the necessary regularity given by Bougain is essentially sufficient for the maximal estimate \eqref{pw} (except the endpoint).

\begin{theorem}[Bourgain \cite{Br16}, Du--Guth--Li \cite{DGL17}, Du--Zhang \cite{DZ18}, Cho--Ko \cite{CK18}]\label{t:classical d}
Let $d\geq2$ and $m>1$.
For $d=2$, \eqref{i:max classical} with $q=3$ holds if $s>\frac13$ and, for $d\geq3$, \eqref{i:max classical} with $q=2$ holds if $s>\frac{d}{2(d+1)}$. Moreover, $s\geq\frac{d}{2(d+1)}$ is necessary for \eqref{i:max classical} with $m=2$ and $q\geq1$.
\end{theorem}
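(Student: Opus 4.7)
The plan is to separate Theorem~\ref{t:classical d} into its two independent halves: the positive bound (sufficiency of $s>\frac{d}{2(d+1)}$, respectively $s>\frac13$ when $d=2$) and the counterexample (necessity of $s\geq\frac{d}{2(d+1)}$ when $m=2$), each of which relies on a different body of machinery developed in \cite{Br16,DGL17,DZ18,CK18}.

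\emph{Sufficiency.} A Littlewood--Paley decomposition reduces matters to a frequency-localised estimate on an annulus $|\xi|\sim N$, the goal being to produce only an $N^s$-loss on the right-hand side of \eqref{i:max classical}. After restricting to the unit time interval $\mathbb I$ and linearising the $L^\infty_t$-norm by choosing a measurable time function $t(x)$, the maximal operator becomes a single oscillatory integral whose phase $x\cdot\xi+t(x)|\xi|^m$ is, under parabolic rescaling, locally modelled by the standard Schr\"odinger phase; the frequencies near the origin contribute only Sobolev tails, which is why $m>1$ plays almost no role. I would then run the Bourgain--Guth broad/narrow dichotomy on the physical side: in the broad regime, for $d=2$ the Bennett--Carbery--Tao multilinear restriction estimate combined with the sharp $\ell^2$-decoupling theorem of Bourgain--Demeter yields the $L^3$ bound of Du--Guth--Li; for $d\geq3$ the stronger $L^2$ bound of Du--Zhang arises by combining a refined Strichartz estimate with Guth's polynomial partitioning on wave packets, tracking cellular, algebraic, and transverse contributions at every scale. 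In the narrow regime one iterates the estimate after a further parabolic rescaling.

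\emph{Necessity.} For the lower bound I would follow Bourgain's arithmetic argument. Fix a large prime $Q$ and a parameter $N\gg1$, and choose an initial datum whose Fourier transform is a sum of smooth bumps centred at the rational lattice points of $Q^{-1}\sqrt N\,\mathbb Z^d$ inside the annulus $\{|\xi|\sim\sqrt N\}$. Evaluating $S^2_t f(x)$ at resonant times $t=p/Q$ and rational points $x\in Q^{-1}\mathbb Z^d\cap\mathbb B^d$, a Poisson-type computation factorises the solution into a multidimensional Gauss sum, depending only on $p/Q$, times a slowly varying amplitude. Standard lower bounds on Gauss sums then force constructive interference on a set of rational points of density $\sim Q^{-d}$ inside $\mathbb B^d$. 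Balancing the resulting $L^q_x$ lower bound against the $H^s$-norm of the datum, which only grows like $N^s$ times the cardinality of the frequency support, one reads off $s\geq\frac{d}{2(d+1)}$ uniformly in $q\geq1$ and in the prime $Q$.

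\emph{Main obstacle.} The hardest single ingredient is the polynomial-partitioning step of Du--Zhang, which is precisely what squeezes the exponent from the multilinear-restriction bound down to the sharp value $\frac{d}{2(d+1)}$: the bookkeeping of wave packets surviving through each level of the algebraic-geometric decomposition, and the pigeonholing that recombines cellular, algebraic, and transverse contributions, form the technical heart of the argument. The necessity side, although lighter in harmonic-analytic machinery, is delicate in that the Gauss-sum concentration must be propagated throughout all of $\mathbb B^d$ rather than only at a single rational, which calls for a quantitative equidistribution input uniform in $Q$.
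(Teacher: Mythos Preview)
The paper does not prove Theorem~\ref{t:classical d}; it is stated in the introduction purely as background, with attribution to \cite{Br16,DGL17,DZ18,CK18}, and the text immediately moves on without giving any argument. There is therefore no ``paper's own proof'' to compare your proposal against.

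That said, your sketch is a fair high-level summary of the arguments in the cited references: Littlewood--Paley reduction followed by broad/narrow analysis with multilinear restriction and decoupling for Du--Guth--Li in $d=2$, refined Strichartz plus polynomial partitioning for Du--Zhang in $d\geq3$, and Bourgain's Gauss-sum construction for the necessity. One small inaccuracy: the extension to general $m>1$ on the sufficiency side is the specific contribution of Cho--Ko \cite{CK18}, not a triviality absorbed by parabolic rescaling as you suggest; the non-homogeneity of $|\xi|^m$ for $m\neq2$ means the phase does not rescale to the standard Schr\"odinger phase, and some genuine work is needed to adapt the $m=2$ machinery.
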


It is still open whether $s\geq\frac{d}{2(d+1)}$ is necessary in the case of the fractional Schr\"odinger equation. See the recent progress by An--Chu--Pierce \cite{ACP21} and Eceizabarrena--Ponce-Vanegas \cite{EP22} in this direction.\\

Throughout, the unit ball centered at the origin in $\mathbb R^d$ is denoted by $\mathbb B^d$ and $\mathbb I:=\mathbb B^1$ to emphasize that it coincides with the unit interval.

\section{Fractal dimension of the divergence sets}

Although we know that the Lebesgue measure of the \emph{divergence set} $\D(f):=\{x\in\mathbb R: S_t^m f(x)\not\to f(x)\ \text{as $t\to0$} \}$ is zero for $f\in H^s$ with $s\geq\frac14$ as a consequence of Theorem \ref{t:classical d=1}, the set $\D(f)$ may still be large enough to be ``detected" by a fractal measure. This direction in the context of pointwise convergence was first considered by Sj\"ogren--Sj\"olin \cite{SS89}. In \cite{BBCR11} Barcel\'{o}--Bennett--Carbery--Rogers recently concerned with this question and measured the divergence set by the use of Frostman's lemma, together with the results about the singularities of the Bessel potential due to \v{Z}ubrini\'{c}.

\begin{theorem}[Barcel\'{o}--Bennett--Carbery--Rogers \cite{BBCR11},  \v{Z}ubrini\'{c} \cite{Zb02}]
Let $d=1$ and $m>1$. Then,
\[
\sup_{f\in H^s}\dim_H\D(f)=1-2s,
\]
where $\dim_H$ denotes the Hausdorff dimension.
\end{theorem}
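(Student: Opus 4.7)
The plan is to establish the equality by proving matching upper and lower bounds for $\sup_{f\in H^s}\dimH\D(f)$.

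For the upper bound, I would rely on Frostman's lemma: for any Borel set $E\subset\mathbb R$ with $\dimH E>\alpha$, there exists a nontrivial positive Borel measure $\mu$ supported on $E$ satisfying the Frostman condition $\mu(B(x,r))\leq Cr^\alpha$ for all $x,r$. The core technical step is then to establish the weighted maximal estimate
\begin{equation*}
\Bigl\|\sup_{t\in\mathbb I}|S_t^m f|\Bigr\|_{L^2(d\mu)}\leq C\|f\|_{H^s(\mathbb R)}
\end{equation*}
whenever $\alpha>1-2s$, with a constant depending only on the $\beta$-Riesz energy $\iint|x-y|^{-\beta}\,d\mu(x)\,d\mu(y)$ for some $\beta$ strictly between $1-2s$ and $\alpha$ (the Frostman condition guarantees the finiteness of this energy). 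I would obtain this by a $TT^*$-type argument combined with a dyadic Littlewood--Paley decomposition in frequency, reducing matters to an oscillatory kernel bound whose pairing with $\mu\otimes\mu$ is controlled precisely by the Riesz energy. Once this weighted estimate is in hand, a standard approximation of $f\in H^s$ by Schwartz functions forces $\mu(\D(f))=0$ for every $f\in H^s$, and hence $\dimH\D(f)\leq 1-2s$.

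For the lower bound, for each $\alpha<1-2s$ I would construct $f\in H^s$ whose divergence set has Hausdorff dimension at least $\alpha$. Writing $H^s=G_s*L^2$ where $G_s$ denotes the Bessel potential kernel of order $s$, I take $f=G_s*g$ for a carefully designed $g\in L^2$ so that $f$ exhibits singularities on a prescribed compact set of dimension close to $1-2s$. \v{Z}ubrini\'{c}'s work provides the precise ingredient: for $2s<1$ one can realize sets of Bessel-potential singularities of Hausdorff dimension arbitrarily close to $1-2s$. The remaining step is to verify that $S_t^m f(x)$ fails to converge to $f(x)$ at each point of this singular set (in terms of the quasi-continuous representative of $f$), since $S_t^m f$ is smooth away from $t=0$ while the representative of $f$ blows up on the prescribed set; this places the singular set inside $\D(f)$.

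The main obstacle I anticipate is the weighted maximal inequality in the upper bound. It is a genuine strengthening of Theorem \ref{t:classical d=1}: replacing $L^q(dx)$ by $L^2(d\mu)$ for an $\alpha$-Frostman measure affords extra maximal regularity, but tracking the dependence on $\alpha$ (equivalently on the Riesz energy of $\mu$) down to the sharp threshold $\alpha>1-2s$ requires an oscillatory-integral analysis tailored to the phase $x\cdot\xi+t|\xi|^m$ and not merely a soft extrapolation from the unweighted case. In the lower bound, the delicate step is ensuring that the singular-set construction behind \v{Z}ubrini\'{c}'s theorem really lives in $\D(f)$ and not just in the set where the Bessel representative is infinite; this requires a short continuity argument for $S_t^m f$ off the singular set to rule out artificial cancellations.
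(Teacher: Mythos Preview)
The paper does not actually prove this theorem; it is stated as a cited result from \cite{BBCR11} and \cite{Zb02}, with only a one-sentence description of the method: Frostman's lemma for the upper bound, \v{Z}ubrini\'{c}'s singularity results for the lower bound, and the key technical input being the maximal inequality \eqref{i:max classical} with $\d x$ replaced by an $\alpha$-dimensional measure $\d\mu$. Your proposal is precisely this strategy, fleshed out in more detail, and matches what the original references do. There is nothing further to compare against in the present paper.

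One small remark on your lower-bound sketch: the point that the singular set of the Bessel-potential representative genuinely sits inside $\D(f)$ is handled in \cite{BBCR11} not by a continuity argument for $S_t^m f$ but by noting that for Schwartz approximants $f_n\to f$ in $H^s$ one has $S_t^m f_n(x)\to S_t^m f(x)$ for every $x$ and every fixed $t>0$, so if $S_t^m f(x)\to\ell$ along some sequence $t_k\to 0$ then $\ell$ must agree with the quasi-continuous representative wherever the latter is finite; at points where the representative is $+\infty$ this forces divergence. Your ``short continuity argument'' is in the right spirit but slightly misstates where the work lies.
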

The analogous results in higher dimensions are also available but there are still some remaining unknown cases for small $s$. Interested readers are encouraged to visit \cite{DGLZ18, LP21, LR17, LR19b} aside from \cite{BBCR11}.  The key estimate is the maximal inequality \eqref{i:max classical} yet its spatial measure $\d x$ is replaced by the $\alpha$-dimensional measure $\d\mu$ characterized by the property
\[
\sup_{x\in\mathbb R^d,r>0}\frac{\mu(B^d(x,r))}{r^\alpha}<\infty,
\]
where $0<\alpha\leq d$ and a ball $B^d(x,r)$ of radius $r$ centered at $x$.

\section{Convergence along a tangential curve}
The original convergence of the solutions to the fractional Schr\"odinger equations can be regarded as the limit along the vertical line to the hyperplane $\mathbb R^d\times\{0\}$ at $x$, i.e. 
\[
\lim_{t\to0}S_t^mf(x)=\lim_{\substack{(y,t)\to(x,0)\\ (y,t)\in\{x\}\times (0,1)}}S_t^mf(y).
\]
We shall replace the path of the vertical line with more general paths. In this section, let us consider a convergence along a curve. When $d=1$, we shall define curves by 
\[
\rho_\kappa(x,t)=x-t^\kappa
\]
with $\kappa>0$ and call it \emph{non-tangential} and \emph{tangential} when $\kappa\geq1$ and $0<\kappa<1$, respectively. The corresponding pointwise convergence problem along a tangential curve is 
\begin{equation}\label{pw tangential}
\lim_{\substack{(y,t)\to(x,0)\\ y=x-t^\kappa}} S_t^mf(y)
=
\lim_{t\to0}S_t^mf(\rho_\kappa(x,t))
=
f(x)\quad \ae \,\, x.
\end{equation}
By the standard argument mentioned earlier (see also \cite{CS20} in this particular setting), it suffices to show the maximal estimate: 
\begin{equation}\label{i:maximal curve}
\|S_t^m f(\rho_\kappa(x,t))\|_{L_x^q(\mathbb I)L_t^\infty(\mathbb I)}\leq C\|f\|_{H^s}
\end{equation}
for all $f\in H^s$. In the study of pointwise convergence with harmonic operators Lee--Rogers \cite{LR12} discovered\footnote{They only dealt with the standard Schr\"odinger equation but the same conclusion holds for $m>1$. In order to show this, a similar argument in \cite{CS20} can be carried since $|t_1-t_2|\gtrsim |t_1^\kappa-t_2^\kappa|$ by the mean value theorem.} that 
there is no difference between the pointwise convergence along a non-tangential curve \eqref{pw tangential} and the one along the vertical line \eqref{i:max classical}. Intuitively, this makes sense since the non-tangential curve literally looks like the vertical line in a neighborhood of $(x,0)$ in the space-time plane for sufficiently large enough $\kappa$.
%
\begin{theorem}[Lee--Rogers \cite{LR12}]
Let $d=1$, $m>1$ and $\kappa\geq1$. Then, \eqref{i:maximal curve} with $q=2$ holds for all $f\in H^s$ if $s\geq\frac14$.
\end{theorem}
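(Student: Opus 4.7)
The plan is to adapt the classical Kenig--Ponce--Vega argument that underlies Theorem \ref{t:classical d=1} to the curved trace $x \mapsto x - t^\kappa$, and to show that the hypothesis $\kappa \geq 1$ is exactly strong enough for this adaptation to go through. First, by the standard selector reduction, fix a measurable linearising function $T: \mathbb I \to \mathbb I$ so that \eqref{i:maximal curve} with $q=2$ becomes the bound
\[
\|S_{T(x)}^m f(x - T(x)^\kappa)\|_{L^2_x(\mathbb I)} \leq C\|f\|_{H^{1/4}},
\]
uniform in $T$. Writing this as an $L^2$-to-$L^2$ boundedness for a suitable operator and applying the $TT^*$ identity, I would reduce the problem to the $L^2(\mathbb I)$-boundedness of the integral operator with kernel
\[
K_T(x,y) = \int_{\mathbb R} (1+|\xi|^2)^{-1/4}\,e^{i A(x,y)\,\xi + i B(x,y)\,|\xi|^m}\,\d\xi,
\]
where $A(x,y) := (x-y) - (T(x)^\kappa - T(y)^\kappa)$ and $B(x,y) := T(x) - T(y)$.

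The main work is the pointwise control of $K_T$ via stationary phase in $\xi$. The phase $A\xi + B|\xi|^m$ has critical point $|\xi_*|^{m-1} \sim |A|/|B|$; in the regime $|A| \gtrsim |B|$ this point lies outside the unit ball and stationary phase, together with the amplitude decay $(1+|\xi|^2)^{-1/4}$, yields $|K_T(x,y)| \lesssim |A(x,y)|^{-1/2}$, while in the complementary regime $|A| \lesssim |B|$ van der Corput's lemma applied to the dispersive factor $e^{iB|\xi|^m}$ produces an acceptable bound in terms of $|B|$. At this point the non-tangentiality condition enters decisively: by the mean value theorem,
\[
|T(x)^\kappa - T(y)^\kappa| \leq \kappa\,|T(x) - T(y)| = \kappa\,|B(x,y)|,
\]
so the tangential correction to $A(x,y)$ is dominated by $|B|$, with a constant depending only on $\kappa$. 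This is precisely the ingredient needed to split the integration over $y$ according to the relative sizes of $|x-y|$ and $|B|$ and recover, up to $\kappa$-dependent constants, the same effective estimate as in the vertical case. A Schur-type test---using that $|x-y|^{-1/2}$ is an integrable Schur majorant on the bounded interval $\mathbb I$---then closes the chain of inequalities and yields \eqref{i:maximal curve}.

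The step I expect to be the hardest is the kernel estimate and the verification of the Schur condition. Because the linearising function $T$ is only measurable and may fail to be monotone, no honest change of variables in $y$ is available, and one has to argue by partitioning the domain according to level sets of $T$ and handling each piece uniformly. Without the hypothesis $\kappa \geq 1$ the mean-value inequality fails---$|t_1^\kappa - t_2^\kappa|$ can be much larger than $|t_1 - t_2|$ for small $t_1,t_2$---so the tangential shift in $A(x,y)$ ceases to be dominated by $|B(x,y)|$ and the balance that stationary phase requires collapses. This is precisely the mechanism that forces strictly higher regularity in the tangential regime $0<\kappa<1$ discussed later in the paper.
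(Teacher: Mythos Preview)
The paper does not supply its own proof of this theorem: it is quoted from Lee--Rogers \cite{LR12}, with a footnote remarking that the extension from $m=2$ to general $m>1$ can be carried out as in \cite{CS20} because, by the mean value theorem, $|t_1^\kappa-t_2^\kappa|\lesssim|t_1-t_2|$ when $\kappa\ge1$. So there is nothing to compare against beyond that remark.

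Your outline is a direct $TT^*$/oscillatory-integral argument in the spirit of \cite{CS20} (and of the proof of Theorem~\ref{t:max fractal alpha-dim} later in this paper), and the crucial step you isolate---the mean-value control $|T(x)^\kappa-T(y)^\kappa|\le\kappa\,|B(x,y)|$ forcing the tangential correction in $A$ to be dominated by $|B|$---is exactly the observation the footnote singles out. This is a legitimate route and should go through; note that in the regime $|x-y|\lesssim|B|$ the van der Corput bound you obtain is of order $|B|^{-1/2}\lesssim|x-y|^{-1/2}$, so in both regimes the kernel is majorised by $|x-y|^{-1/2}$ and Schur applies without further case analysis. It is worth mentioning, though, that the original Lee--Rogers argument is not of this type: as the paper notes (see the discussion before \eqref{i:max fractal 2}), \cite{LR12} proceeds by a \emph{reduction} that transfers the curved maximal estimate to the already-known vertical one, rather than by redoing the kernel analysis. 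Your approach is more hands-on and self-contained; theirs is more modular and immediately imports whatever is known for the vertical problem (which is how \cite{LWY20} later lift the higher-dimensional results).
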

On the other hand, when the curve $\rho_\kappa(x,t)$ is tangential ($0<\kappa<1$), this may not be true anymore. 

\begin{theorem}[Cho--Lee--Vargas \cite{CLV12}, Cho--Lee \cite{CL14}, Cho--Shiraki  \cite{CS20}]\label{t:max tangential}
Let $d=1$, $m>1$, $0<\kappa\leq1$ and $\mu$ be an $\alpha$-dimensional measure. If $s>\max\{\frac14,\frac{1-\alpha}{2},\frac{1-m\alpha\kappa}{2}\}$, then there exists $C>0$ such that 
\begin{equation}\label{i:max tangential}
\|S_t^m f(\rho_\kappa(x,t))\|_{L_x^2(\mathbb I,\d\mu)L_t^\infty(\mathbb I)}
\leq 
C
\|f\|_{H^s(\mathbb R)}
\end{equation}
for all $f\in H^s(\mathbb R)$.
\end{theorem}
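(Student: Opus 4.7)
The plan is to combine the frequency-localised $TT^*$ analysis that underlies the classical Carleson problem with the fractal-measure tools of Barcel\'o--Bennett--Carbery--Rogers, adapted to the tangential curve $\rho_\kappa$.

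As a first step I would Littlewood--Paley decompose $f = \sum_N P_N f$ and reduce to a single-frequency statement: for $f$ with $\widehat{f}$ supported in $\{|\xi|\sim N\}$, establish
\[
\bigl\|S^m_t f(\rho_\kappa(x,t))\bigr\|_{L^2_x(\d\mu) L^\infty_t} \lesssim N^{s_0+\varepsilon} \|f\|_{L^2}
\]
for every small $\varepsilon>0$, with $s_0 = \max\{\tfrac14, \tfrac{1-\alpha}{2}, \tfrac{1-m\alpha\kappa}{2}\}$; summing dyadically over $N$ using the strict inequality $s > s_0$ then delivers the stated theorem. To handle the $L^\infty_t$ norm I would linearise by choosing a measurable selection $t : \mathbb R \to \mathbb I$ and introducing the operator
\[
Tf(x) = S^m_{t(x)} f\bigl(\rho_\kappa(x,t(x))\bigr),
\]
so the task becomes an $L^2(\mathbb R) \to L^2(\d\mu)$ bound for $T$ on frequency-$N$ data, uniformly in $t$.

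The core analytic step is a $TT^*$ calculation. The kernel of $TT^*$ restricted to $|\xi|\sim N$ is
\[
K_N(x,y) = \int_{|\xi|\sim N} e^{i\Phi(\xi;x,y)}\,\d\xi, \qquad \Phi = A\xi + B|\xi|^m,
\]
with $A = (x-y) - (t(x)^\kappa - t(y)^\kappa)$ and $B = t(x)-t(y)$. After rescaling $\xi = N\eta$ the phase becomes $N\bigl(A\eta + BN^{m-1}|\eta|^m\bigr)$ on $|\eta|\sim 1$: when the linear term dominates, repeated integration by parts yields arbitrary $(N|A|)^{-L}$ decay, and when the $B$-term dominates, Van der Corput with $|\Phi''|\sim |B|N^{m-2}$ gives a gain of order $(N^m|B|)^{-1/2}$. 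Inserting the resulting pointwise bound into
\[
\iint K_N(x,y)\, g(x)\,\overline{g(y)}\,\d\mu(x)\,\d\mu(y)
\]
and applying Schur's test together with the fractal condition $\mu(B(x,r))\leq Cr^\alpha$, one splits the analysis into three regimes, each responsible for one term in the maximum: the near-diagonal contribution $|x-y|\lesssim N^{-1}$ reproduces the classical exponent $\tfrac14$; the far-diagonal regime where $|x-y|\gg N^{-1}$ but $|B|N^m \lesssim 1$ yields the fractal Sobolev-type exponent $\tfrac{1-\alpha}{2}$; and the genuinely tangential regime, in which $|A|$ is small because the spatial gap $|x-y|$ is almost cancelled by $|t(x)^\kappa-t(y)^\kappa|$, yields $\tfrac{1-m\alpha\kappa}{2}$.

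The chief difficulty, I expect, will be the tangential regime. There, along the effective trajectory $|x-y|\sim|t(x)^\kappa-t(y)^\kappa|$ one has $|B|\sim|x-y|^{1/\kappa}$, so the two phase contributions nearly cancel and only the residual curvature $|\Phi''|$ remains usable. Matching this $\kappa$-rescaled spatial scale against the $\alpha$-dimensional measure is precisely what produces the exponent $m\alpha\kappa$; to make this clean one must perform a further dyadic decomposition in $|B|$ (equivalently in $|t(x)-t(y)|$) and verify that the resulting double sum over spatial and temporal scales is absolutely summable with at most an $N^\varepsilon$ loss, which is then absorbed in the final sum over $N$.
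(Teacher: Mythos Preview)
The paper does not supply its own proof of this theorem; it is stated with attribution to \cite{CLV12,CL14,CS20}, and only the sharpness of the threshold is discussed here, via the table of explicit counterexamples immediately following the statement. Your outline is, however, precisely the strategy employed in those references---in particular \cite{CS20}---and is mirrored by the paper's own proof of the closely related Theorem~\ref{t:max fractal alpha-dim}: Littlewood--Paley localisation to $|\xi|\sim\lambda$, a $TT^*$/duality reduction, a splitting of the kernel domain into regions $V_1,V_2,V_3$ according to the relative sizes of $|x-x'|$ and a power of $|t-t'|$, van der Corput's lemma (Lemma~\ref{lem:van der Corput}) on the off-diagonal pieces, and a Young/Hardy--Littlewood--Sobolev type inequality against the $\alpha$-dimensional measure (Lemma~\ref{l:Young and HLS-type}) to reassemble. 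So your plan is sound and aligned with the literature; there is simply no proof in the present paper to compare it against beyond that analogous argument. One small caution on the details: in that template the near-diagonal region with the trivial bound $|K_\lambda|\lesssim\lambda$ combined with the $\alpha$-dimensional measure is what produces the fractal exponent, while the $\tfrac14$ comes from the second-derivative van der Corput gain $(\lambda^m|t-t'|)^{-1/2}$ in the time-dominated region; keeping the thresholds straight so that the tangential comparison $|x-x'|$ versus $|t-t'|^\kappa$ yields the remaining exponent $\tfrac{1-m\alpha\kappa}{2}$ is exactly the content of \cite{CS20}.
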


Here, the condition $s>\max\{\frac14,\frac{1-\alpha}{2},\frac{1-m\alpha\kappa}{2}\}$ is sharp in the sense that one can find an $\alpha$-dimensional measure $\mu$ and the initial data $f$ such that \eqref{i:max tangential} fails whenever $s<\max\{\frac14,\frac{1-\alpha}{2},\frac{1-m\alpha\kappa}{2}\}$. As we see momentarily,  this is based on the Knapp-type argument where we also restrict the domain of $(x,t)\in\mathbb B^n\times \mathbb I$.  For instance, the authors in \cite{CS20} set $\d\mu(x)=|x|^{\alpha-1}dx$ and for each condition\footnote{The same spirit for the second row of the table is effectively used in our Proposition \ref{p:infty tangential} and Theorem \ref{t:nec fractal path} in the subsequent sections. }, choose the initial data $f$ and the restriction of $(x,t)$ as follows for $\lambda\geq1$, $A=[0,\lambda^\frac1m]$ and $B=[\lambda,\lambda+\lambda^{-1}]$:

\renewcommand{\arraystretch}{1.5}
\begin{table}[htb]
\centering
 \begin{tabular}{|c||c|c|c|}  \hline 
    To show &
    Initial data &
    $x$ &
    $t=t(x)$\\
    \hline \hline
    $s\geq\frac14$ &
    $\widehat{f}(\xi)=\lambda^{-1}\chi_B(\xi)$ &
    $(0, \frac{1}{100(m-1)})$ &
    $\text{$t(x)$ s.t. $x=t(x)^\kappa+m\lambda^{2m-2}t(x)$}$\\
    \hline
    $s\geq\frac{1-\alpha}{2}$ &
    $\widehat{f}(\xi)=\chi_A(\xi)$ &
    $(0,\frac{1}{100}\lambda^{-\frac1m})$ &
    $t(x)\in (0,\frac{1}{100}\lambda^{-1})$ \\ 
    \hline
    $s\geq\frac{1-m\alpha\kappa}{2}$ &
    $\widehat{f}(\xi)=\chi_A(\xi)$ &
    $(0,\frac{1}{100}\lambda^{-\frac1m})$&
    $t(x)=x^{\frac1\kappa}$\\
    \hline
 \end{tabular}
\end{table}
By applying a similar argument in the classical situation, Theorem \ref{t:max tangential} implies that the pointwise convergence along the tangential curve \eqref{pw tangential} holds for all $f\in H^s$ if $s>\max\{\frac14,\frac{1-m\kappa}{2}\}$, and moreover, $\sup_{f\in H^s}\dim_H\D(f\circ \rho_\kappa)\leq\max\{1-2s,\frac{1-2s}{m\kappa}\}$ if $\max\{\frac14,\frac{1-m\kappa}{2}\} < s < \frac12$.
It is worth noting that the upper bound of the Hausdorff dimension of the divergence sets varies depending on $\kappa\in (0,1]$: The curve $\rho_\kappa$ is classified the same as the vertical line when $\kappa\in(\frac1m,1]$. When $\kappa<\frac1m$, the number $\frac{1-2s}{m\kappa}$ is dominant over $1-2s$. In particular, for $\kappa\in(\frac{1}{2m},\frac1m)$, the gap at $s=\frac14$ for the upper bound of $\dim_H\mathfrak D(f\circ\rho_\kappa)$ remains in existence but reflects the mixture state illustrated in Figure \ref{f:kappa-s} despite the fact that $\frac14\geq\frac{1-m\kappa}{2}$. The gap disappears when $\kappa\in(0,\frac{1}{2m})$.
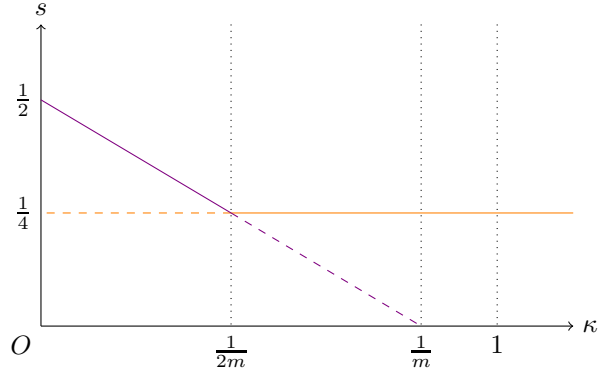
\begin{figure}[h]
\begin{center}
\begin{tikzpicture}[scale=1]
\node [below left] at (0,0) {$O$};

\draw [->] (0,0)--(7,0) node[right] {$\kappa$};
\draw [->] (0,0)--(0,4) node [above] {$s$};
\draw [dotted](6,4)--(6,0) node [below]   {$1$};
\draw [dotted](5,4)--(5,0) node [below] {$\frac1m$};
\draw [dotted](2.5,4)--(2.5,0) node [below] {$\frac{1}{2m}$};

\node at (0,3) [left] {$\frac12$};
\draw [color=orange] (7,1.5)--(2.5,1.5);
\draw [color=orange,dashed] (2.5,1.5)--(0,1.5) node [color=black,left] {$\frac14$};

\draw [color=blue!50!red!,dashed] (2.5,1.5)--(5,0);
\draw [color=blue!50!red!] (0,3)--(2.5,1.5);

\end{tikzpicture}
\caption{The sharp smooth regularity depending on $\kappa$ when $m>1$.}\label{f:kappa-s}
\end{center}
\end{figure}

In the higher dimensional cases of \eqref{i:maximal curve}, the formulation of the curve is rather abstract. 
Recently Li--Wang  \cite{LW18, LW21} obtained some partial results for a curve $\gamma$ such that 
\[
\begin{cases}
|\gamma(x,t)-\gamma(x,t')|\leq c|t-t'|^\kappa\\
\gamma(x,0)=x
\end{cases}
\]
for $x\in\mathbb B^d$, $t,t'\in \mathbb I$, $\kappa\in (0,1)$ and for some $c>0$.

Casually speaking, the smaller the H\"older continuity index becomes, the more the curve $\rho_\kappa(x,t)$ gets ``tangent" to the hyperplane. This may be clearer if we write $y=-\rho_\kappa(x,t)=t^\kappa-x$ and re-express it as $t=(y+x)^{\frac1\kappa}$, the graph of $y\mapsto t$ touching $\mathbb R\times\{0\}$ at $y=-x$. While the curve $\rho_\kappa(x,t)$ is of polynomial tangential (of order $\frac1\kappa$), one may wonder what happens for convergence along a curve of order $\infty$, or exponentially tangent curve (beyond polynomial order).
The typical example of such curve is formed by $t=e^{-\frac{1}{y+x}}$, instead of $t=(y+x)^{\frac1\kappa}$. Considering the convergence along this ``exponentially tangential" curve, which is reformulated and denoted as $\widetilde{\rho}(x,t)=x-(\log\frac1t)^{-1}$, one can show that the smooth regularity for the corresponding maximal inequality is, consistently, as almost bad as a trivial result; $s\geq\frac12$. 
\begin{proposition}\label{p:infty tangential}
Let $m>1$, $0<\kappa\leq1$ and $\d\mu(x)=|x|^{\alpha-1}\d x$. Then there exists $C>0$ such that 
\begin{equation}\label{maximal endpoint}
\|S_t^m f(\widetilde{\rho}(x,t))\|_{L_x^2(\mathbb I,\d\mu)L_t^\infty(\mathbb I)}\leq C\|f\|_{H^s(\mathbb R)}
\end{equation}
fails if $s<\frac12$.
\end{proposition}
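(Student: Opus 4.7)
The plan is to adapt the Knapp-type construction behind the second row of the table preceding Proposition \ref{p:infty tangential}, which produced the threshold $s\geq(1-\alpha)/2$, to the exponentially tangential curve. The guiding observation is that the equation $\widetilde{\rho}(x,t)=0$ has the explicit solution $t=t(x):=e^{-1/x}$, so by choosing $x$ in a sufficiently small interval near the origin one can force $t(x)$ to be as small as $\lambda^{-1}$, which is exactly the Knapp window needed to suppress the oscillation at frequency scale $\lambda^{1/m}$.

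Concretely, for $\lambda\gg1$ I would set $A:=[0,\lambda^{1/m}]$ and take $f$ with $\widehat{f}=\chi_A$. A direct computation yields $\|f\|_{H^s(\mathbb R)}^2\sim\int_0^{\lambda^{1/m}}(1+\xi^2)^s\,\d\xi\sim\lambda^{(2s+1)/m}$. Next I would introduce
\[
E_\lambda:=\bigl(0,\,(\log\lambda+\log 100)^{-1}\bigr),
\]
chosen so that $t(x)\leq(100\lambda)^{-1}$ for every $x\in E_\lambda$. For such $x$, $\widetilde{\rho}(x,t(x))=0$, so the phase in $S_{t(x)}^m f(\widetilde{\rho}(x,t(x)))$ reduces to $t(x)\xi^m$, which has modulus at most $1/100$ on $A$. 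Consequently $\mathrm{Re}\,e^{it(x)\xi^m}\geq 1/2$ uniformly in $\xi\in A$, and integrating in $\xi$ gives
\[
|S_{t(x)}^m f(\widetilde{\rho}(x,t(x)))|\gtrsim \lambda^{1/m},\qquad x\in E_\lambda.
\]

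To finish, I would take $L_t^\infty$ first (retaining only the value at $t=t(x)$) and then $L_x^2(\mathbb I,\d\mu)$ restricted to $E_\lambda$, obtaining
\[
\|S_t^m f(\widetilde{\rho}(x,t))\|_{L_x^2(\mathbb I,\d\mu)L_t^\infty(\mathbb I)}
\gtrsim
\lambda^{1/m}\Bigl(\int_{E_\lambda}x^{\alpha-1}\,\d x\Bigr)^{1/2}
\sim
\lambda^{1/m}(\log\lambda)^{-\alpha/2}.
\]
If \eqref{maximal endpoint} held, comparison with $\|f\|_{H^s}\sim\lambda^{(2s+1)/(2m)}$ would force $\lambda^{(1-2s)/(2m)}\leq C(\log\lambda)^{\alpha/2}$, which fails as $\lambda\to\infty$ whenever $s<\frac12$.

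The only genuinely new feature compared with the polynomial tangential case is the passage from $t=x^{1/\kappa}$ to $t=e^{-1/x}$: the window of admissible $x$-values shrinks from a polynomial interval of length $\sim\lambda^{-\kappa}$ to a logarithmic one of length $\sim(\log\lambda)^{-1}$. I do not expect a substantive obstacle here; the only care required is to verify the strict inequality $t(x)<(100\lambda)^{-1}$ on $E_\lambda$ and to track that the resulting loss $(\log\lambda)^{-\alpha/2}$ on the left-hand side is negligible against any polynomial gain $\lambda^{(1-2s)/(2m)}$ with $s<\frac12$. Notably, this logarithmic loss is precisely why no $\alpha$-dependent improvement over the trivial threshold $\frac12$ can be extracted by this method, in sharp contrast with Theorem \ref{t:max tangential}.
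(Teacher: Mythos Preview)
Your proposal is correct and follows essentially the same approach as the paper's proof: the same Knapp-type initial data $\widehat f=\chi_{[0,\lambda^{1/m}]}$, the same choice $t(x)=e^{-1/x}$ making $\widetilde\rho(x,t(x))=0$, and the same comparison of the polynomial gain $\lambda^{(1-2s)/(2m)}$ against the logarithmic loss $(\log\lambda)^{-\alpha/2}$. The only differences are cosmetic (the paper puts the factor $1/100$ into the definition of $A$ rather than into $E_\lambda$, and phrases the final contradiction by fixing $\varepsilon<\tfrac12-s$).
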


\begin{proof}

Let $s<\frac12$ and suppose \eqref{maximal endpoint} held. Define the initial data $f$ by
\[
\widehat{f}(\xi)=\chi_{A}(\xi), \quad A=[0,\tfrac{1}{100}\lambda^\frac1m]
\]
so that $\|f\|_{H^s(\mathbb R)}\lesssim\lambda^{\frac sm}\lambda^{\frac{1}{2m}}$. Then, considering $t=t(x)$ as a fuction of $x$, it holds that
\[
\sup_{t\in \mathbb I}|S_t^m f(\widetilde{\rho}(x,t))|
\ge
\left|\int_{A} e^{i(x-(\log1/t(x))^{-1})\xi+t(x)|\xi|^m)}\,\d\xi\right|.
\]
For each of the choices of $x\in(0,(\log\lambda)^{-1})$ and $t=t(x)=e^{-\frac1x}$ let the phase be fairly small, namely,
\[
|(x-(\log1/t(x))^{-1})\xi+t(x)|\xi|^m|\le \frac12
\]
for a sufficiently large $\lambda$.
Therefore, we can choose $\varepsilon$ so that $0<\varepsilon<\frac12-s$ and
\begin{align*}
\Big\|\sup_{t\in \mathbb I}\big|S_t^m f(\widetilde{\rho}(x,t))\big|\Big\|_{L^2(\mathbb I,\d\mu)}
&\gtrsim
\|S_t^m f(\widetilde{\rho}(x,e^{-\frac1x}))\|_{L^2((0,(\log\lambda)^{-1}),\d\mu)}\\
&\gtrsim
\lambda^{\frac1m}(\log \lambda)^{-\frac\alpha2}\\
&\gtrsim
\lambda^{\frac1m-\frac\varepsilon m}
\end{align*}
for such large number $\lambda$.
Combined with the estimate of the right-hand side, it follows that 
\[
\lambda^{\frac1m-\frac\varepsilon m}\lesssim\lambda^{\frac{s}{m}+\frac{1}{2m}},
\]
which is a contradiction as $\lambda\to\infty$ under $s<\frac12$.
\end{proof}

\section{Convergence along a set of non-tangential lines}\label{s:fractal}
In \cite{SS89}, Sj\"ogren--Sj\"olin (see \cite{Jo10} for the fractional Schr\"odinger equation) considered the convergence within a conical region over $(x,0)\in\mathbb R^d\times\{0\}$, instead of the limit along the vertical line to the point $(x,0)$, and proved that the trivial regularity $s>\frac d2$ (as we  observed at the beginning of this note) is actually necessary in this case:
It is tempting to unify their result and Theorem \ref{t:classical d=1}/\ref{t:classical d}.
To do so, notice that the vertical line is regarded as a line $\{x+t\theta:x\in \mathbb R^d,\ t\in \mathbb I, \ \theta\in \{0\}\}$, while the conical region is regarded as a set $\{x+t\theta:x\in\mathbb R^d,\ t\in \mathbb I, \ \theta\in [-1,1]\}$ for example.
In the one-dimensional case, Lee--Vargas and the first author considered convergence along any path within a region generated by a set $\{x+t\theta:x\in\mathbb R^d, \ t\in \mathbb I,  \ \theta\in \Theta\}$ for a given compact set $\Theta\subset\mathbb R$ whose Minkowski dimension is lied in $(0,1)$, such as the third Cantor set. This is an intermediate case since the Minkowski dimension of $\{0\}$ and $[-1,1]$ are $0$ and $1$, respectively. In general, the Minkowski dimension of a compact subset $\Theta\subset \mathbb R^d$ is defined by
\[
\dim_M\Theta
=
\inf\{\beta>0: \limsup_{\delta\to0}N_\delta(\Theta)\delta^\beta=0\}
\]
for $N_\delta(\Theta)$ denoting the smallest number of $\delta$-ball covering of $\Theta$.
By letting $\Theta$ generate the path (the conical region with a bunch of linear holes in it), the following sufficient result has been obtained. We give the path explicitly by
\[
\varrho(x,t,\theta)=x+t\theta, \quad (x,t,\theta)\in\mathbb B^d\times\mathbb I\times \Theta
\]
and let $\beta(\Theta)=\dim_M\Theta$.

\begin{theorem}[Cho--Lee--Vargas \cite{CLV12}, Shiraki \cite{Sh19}]\label{t:max fractal}
Let $d=1$, $m>1$ and $\Theta$ be a compact subset of $\mathbb R$.  If $s>\frac{1+\beta(\Theta)}{4}$, then 
\begin{equation}\label{i:max fractal 1}
\|S_t^mf(\varrho(x,t,\theta))\|_{L_x^4(\mathbb I)L_t^\infty(\mathbb I)L_\theta^\infty(\Theta)}
\lesssim
\|f\|_{H^s(\mathbb R)}
\end{equation}
for all $f\in H^s(\mathbb R)$.
\end{theorem}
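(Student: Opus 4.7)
I would combine a Littlewood--Paley reduction with a Minkowski-covering discretisation of $\Theta$, reducing matters to finitely many instances of the classical one-dimensional Carleson inequality (Theorem \ref{t:classical d=1}) perturbed by a bounded linear phase. First, decompose $f=\sum_\lambda f_\lambda$ with $\widehat{f_\lambda}$ supported in $\{|\xi|\sim\lambda\}$, $\lambda$ dyadic. By the triangle inequality and the standard $\varepsilon$-room argument, it suffices to prove, for arbitrary $\varepsilon>0$, the single-frequency bound
\[
\|S_t^m f_\lambda(\varrho(x,t,\theta))\|_{L_x^4(\mathbb I)L_t^\infty(\mathbb I)L_\theta^\infty(\Theta)} \lesssim \lambda^{(1+\beta(\Theta))/4+\varepsilon}\|f_\lambda\|_{L^2}
\]
and then sum in $\lambda$, with $s>(1+\beta(\Theta))/4$ absorbing the $\varepsilon$-loss.

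\textbf{Discretising $\Theta$.} By definition of the Minkowski dimension, for each $\varepsilon>0$ I may cover $\Theta$ by $N_\lambda\lesssim\lambda^{\beta(\Theta)+\varepsilon}$ closed intervals $I_1,\ldots,I_{N_\lambda}$ of length $\lambda^{-1}$ with centres $\theta_j$. Since both $\widehat{f_\lambda}$ and $\widehat{S_t^m f_\lambda}$ are supported in $\{|\xi|\sim\lambda\}$, and since $|t(\theta-\theta_j)|\leq\lambda^{-1}$ for $\theta\in I_j$ and $t\in\mathbb I$, a Peetre-type maximal function estimate gives
\[
|S_t^m f_\lambda(x+t\theta)|\lesssim \bigl(M|S_t^m f_\lambda|^r\bigr)^{1/r}(x+t\theta_j)
\]
for any fixed $1<r<4$, where $M$ denotes the one-dimensional Hardy--Littlewood maximal operator acting in the spatial variable. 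Writing $T_t^{\theta_j} f_\lambda(y):=S_t^m f_\lambda(y+t\theta_j)$, so that $\widehat{T_t^{\theta_j}f_\lambda}(\xi)=e^{it(\theta_j\xi+|\xi|^m)}\widehat{f_\lambda}(\xi)$, a change of variables inside $M$ identifies the right-hand side with $(M|T_t^{\theta_j}f_\lambda|^r)^{1/r}(x)$. Commuting $\sup_t$ through $M$ and invoking the $L^{4/r}$-boundedness of $M$, after localising everything to an enlarged interval $C'\mathbb I$ depending only on $\sup_\Theta|\theta|<\infty$, I obtain
\[
\Bigl\|\sup_{t\in\mathbb I}\sup_{\theta\in I_j}|S_t^m f_\lambda(x+t\theta)|\Bigr\|_{L^4_x(\mathbb I)} \lesssim \Bigl\|\sup_{t\in\mathbb I}|T_t^{\theta_j} f_\lambda|\Bigr\|_{L^4(C'\mathbb I)}.
\]

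\textbf{Carleson per direction and summation.} The modified phase $\xi\mapsto\theta_j\xi+|\xi|^m$ has the same second derivative $m(m-1)|\xi|^{m-2}$ as the original $|\xi|^m$, the additional linear term being a spatial translate; hence Theorem \ref{t:classical d=1}, applied to the frequency-localised datum $f_\lambda$ (with the usual frequency-localised rephrasing giving the $\lambda^{1/4+\varepsilon}$ gain), bounds the last display by $\lambda^{1/4+\varepsilon}\|f_\lambda\|_{L^2}$, uniformly in $j$. Summing via the trivial embedding $\ell^\infty_j\hookrightarrow\ell^4_j$ yields
\[
\bigl\|\sup_t\sup_\theta|S_t^m f_\lambda(\varrho(x,t,\theta))|\bigr\|_{L^4_x(\mathbb I)} \leq \Bigl(\sum_{j=1}^{N_\lambda}\|\cdots\|_{L^4_x(\mathbb I)}^4\Bigr)^{1/4}\lesssim N_\lambda^{1/4}\lambda^{1/4+\varepsilon}\|f_\lambda\|_{L^2}\lesssim \lambda^{(1+\beta(\Theta))/4+2\varepsilon}\|f_\lambda\|_{L^2},
\]
which is the desired single-frequency estimate.

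\textbf{Main obstacle.} The delicate step is the discretisation in $\theta$: one must pass from the continuous $L^\infty_\theta$ to a finite supremum of cardinality $\lambda^{\beta(\Theta)+\varepsilon}$ without breaking either the $L^\infty_t$-structure or the local $L^4_x$-norm. This rests crucially on the Littlewood--Paley frequency localisation $|\xi|\sim\lambda$, which guarantees that $\theta\mapsto S_t^m f_\lambda(x+t\theta)$ is essentially constant on $\theta$-scale $(t\lambda)^{-1}\geq\lambda^{-1}$ uniformly in $t\in\mathbb I$; the Peetre-type bound then absorbs the small spatial displacement, the change-of-variables in $M$ exchanges ``moving base point'' for ``moving propagator'', and the subsequent application of Theorem \ref{t:classical d=1} to the tilted phase $\theta_j\xi+|\xi|^m$ is entirely classical thanks to compactness of $\Theta$.
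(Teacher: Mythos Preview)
Your outer architecture---Littlewood--Paley decomposition followed by a Minkowski covering of $\Theta$ at scale $\lambda^{-1}$ and $\ell^4$-summation over the $N_\lambda\lesssim\lambda^{\beta(\Theta)+\varepsilon}$ pieces---coincides with the paper's. The genuine difference is in how you establish the single-interval estimate. The paper (via Proposition~\ref{p:fractal suff}, which specialises Theorem~\ref{t:max fractal alpha-dim} to $\alpha=1$, $q=4$) proves it from scratch by a $TT^*$ argument: one writes $\|T^*F\|_{L^2}^2$ as a double integral with oscillatory kernel, splits the domain according to $|x-x'|$ and $|t-t'|$, and applies van der Corput's lemma together with the Young/Hardy--Littlewood--Sobolev-type Lemma~\ref{l:Young and HLS-type}. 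You instead reduce the single-interval estimate to the classical vertical-line maximal inequality by a Peetre-type bound, which is closer in spirit to the reduction of Lee--Rogers \cite{LR12} and Li--Wang--Yan \cite{LWY20} mentioned just after Theorem~\ref{t:max fractal}. Your route is more modular and makes transparent that Theorem~\ref{t:max fractal} is a ``soft'' consequence of Theorem~\ref{t:classical d=1}; the paper's $TT^*$ route is what allows the generalisation to $\alpha$-dimensional measures in Theorem~\ref{t:max fractal alpha-dim}, since the Hardy--Littlewood maximal step in your argument does not survive replacing $\mathrm dx$ by $\mathrm d\mu$.

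One point deserves care: after the maximal-function step you arrive at $\|\sup_{t\in\mathbb I}|T_t^{\theta_j}f_\lambda|\|_{L^4}$, and the Hardy--Littlewood operator $M$ is global, so your ``localising everything to $C'\mathbb I$'' is not automatic. What you actually need is the \emph{global} frequency-localised bound $\|\sup_{t\in\mathbb I}|T_t^{\theta_j}f_\lambda|\|_{L^4(\mathbb R)}\lesssim\lambda^{1/4}\|f_\lambda\|_{L^2}$, uniformly in $\theta_j$. This is indeed available (Sj\"olin \cite{Sj87}, Kenig--Ponce--Vega \cite{KPV91}), and the linear perturbation $\theta_j\xi$ is harmless exactly for the reason you give (unchanged second derivative), but Theorem~\ref{t:classical d=1} as stated in the paper is local, so you should cite the global estimate directly rather than appeal to a localisation that has not been justified.
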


This result immediately implies that the pointwise convergence along non-tangential lines 
\[
\lim_{t\to0}S_t^m f(\varrho(x,t,\theta))=f(x)\ \ae
\]
holds for all $f\in H^s(\mathbb R)$ with $s>\frac14+\frac{\beta(\Theta)}{4}$.  Very recently the higher dimensional cases are also considered.  Li--Wang--Yan \cite{LWY20} adapted an analogous reduction argument in \cite{LR12} and invoked the results for the pointwise convergence along the vertical line. In particular, when $d=2$, combining with the result from \cite{DGL17}, they showed that 
there exists $C>0$ such that 
\begin{equation}\label{i:max fractal 2}
\|S_t^2f(\varrho(x,t,\theta))\|_{L_x^3(\mathbb B^2)L_t^\infty(\mathbb I)L_\theta^\infty(\Theta)}
\lesssim
\|f\|_{H^s(\mathbb R^2)}
\end{equation}
for all $f\in H^s(\mathbb R^2)$ whenever $s>\frac{1+\beta(\Theta)}{3}$, which interpolates the case $s>\frac13$ for $\beta(\Theta)=0$ \cite{DGL17} and $s>1$ for $\beta(\Theta)=2$ \cite{SS89}.\\

As far as the authors are aware, there was no result that indicates whether or not the regularity $s>\frac{d}{2(d+1)}(1+\beta(\Theta))$ is sharp for any $d$, unless $\Theta$ has either $0$ or the full dimension. We construct a counterexample that shows that $s>\frac{1+\beta(\Theta)}{4}$ for \eqref{i:max fractal 1} and $s>\frac{1+\beta(\Theta)}{3}$ for $\eqref{i:max fractal 2}$ are reasonable in the case of $\d\mu(x)=\d x$.
\begin{theorem}\label{t:nec fractal path}
Let $d\geq1$ and $m>1$. Then, there exists $\Theta\subset\mathbb R^d$ such that 
\[
\|S_t^mf(\varrho(x,t,\theta))\|_{L^q_x(\mathbb B^d)L^\infty_t(\mathbb I)L^\infty_\theta(\Theta)}
\leq
C\|f\|_{H^s(\mathbb R^d)}
\]
fails if $s<\frac d2-\frac dq+\frac{\beta(\Theta)}{q}$.  
\end{theorem}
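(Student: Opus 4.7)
The approach is a Knapp-type counterexample, in the same spirit as the three-row table following Theorem~\ref{t:max tangential}. First, take $\Theta\subset\mathbb B^d$ to be a compact self-similar set of Minkowski dimension exactly $\beta$---for instance an appropriate Cantor-type product---arranged so that for a sequence of scales $\delta_k\downarrow 0$ the entropy lower bound $N_{\delta_k}(\Theta)\gtrsim \delta_k^{-\beta}$ holds. Equivalently, $\Theta$ contains a $\delta_k$-separated subset of cardinality $\gtrsim \delta_k^{-\beta}$, which will drive the count below.

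For the initial datum, set $\widehat{f_\lambda}(\xi)=\chi_{A_\lambda}(\xi)$ with $A_\lambda=\{|\xi|\le c\lambda^{1/m}\}$ a frequency ball at the origin, $c>0$ a small absolute constant, and $\lambda\gg 1$ a parameter tied to $\delta_k$ by $\lambda=\delta_k^{-m}$. A direct computation gives $\|f_\lambda\|_{H^s(\mathbb R^d)}\sim \lambda^{(d+2s)/(2m)}$, and by controlling the phase $y\cdot\xi+t|\xi|^m$ on $A_\lambda$ one has
\[
|S_t^m f_\lambda(y)|\gtrsim \lambda^{d/m}\quad\text{for}\quad |y|\le c'\lambda^{-1/m},\ \ 0<t<c'\lambda^{-1}.
\]

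Substituting $y=x+t\theta$ and taking $\sup_{(t,\theta)\in\mathbb I\times\Theta}$, for a well-chosen $t_0=t_0(\lambda)$ the good set
\[
E_\lambda:=\Big\{x\in\mathbb B^d:\ \sup_{t,\theta}|S_t^m f_\lambda(x+t\theta)|\gtrsim \lambda^{d/m}\Big\}
\]
should contain the $c'\lambda^{-1/m}$-neighborhood of $-t_0\Theta$. By the Minkowski-dimension lower bound on $\Theta$ at the critical scale, this neighborhood has Lebesgue measure $\gtrsim \lambda^{-(d-\beta)/m}$. Hence
\[
\|S_t^m f_\lambda(x+t\theta)\|_{L^q_x L^\infty_t L^\infty_\theta}\gtrsim \lambda^{d/m}\,|E_\lambda|^{1/q}\gtrsim \lambda^{d/m-(d-\beta)/(qm)},
\]
and comparison with $\|f_\lambda\|_{H^s}\sim \lambda^{(d+2s)/(2m)}$ forces $s\ge d/2-d/q+\beta/q$; letting $\lambda\to\infty$ along the sequence $\lambda_k=\delta_k^{-m}$ produces the failure of the stated inequality for $s<d/2-d/q+\beta/q$.

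The main technical obstacle is the lower bound on $|E_\lambda|$ in the previous paragraph: with the ball-at-origin choice of $A_\lambda$, the admissible $t$-window has size $O(\lambda^{-1})$ while the good region in $y$ has width $\lambda^{-1/m}$, so for $m>1$ the shift $t\Theta$ is smaller than the width of the good region and the naive construction yields no fractal-dimensional gain. Overcoming this requires either (a) replacing $A_\lambda$ by a superposition of Knapp caps at moderate frequency, indexed by a $\delta_k$-net of $\Theta$, each cap's group velocity pointing toward a specific $-\theta$ so that the resulting peaks in $x$-space are essentially disjoint and their count is $\sim\delta_k^{-\beta}$; or (b) a single anisotropic Knapp cap at a carefully chosen intermediate frequency scale $\Lambda=\lambda^\alpha$ with cap-width $R=\lambda^\gamma$ and time window $t_0=\lambda^{-\tau}$. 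Either way, a careful bookkeeping balancing the phase-cancellation condition $t\Lambda^{m-2}R^2\lesssim 1$, the propagation-time condition $t|\Lambda|^{m-1}\lesssim 1$, and the $\Theta$-shift condition $t\gtrsim R^{-1}$, together with the entropy count $\sim \delta_k^{-\beta}$, matches the exponents on both sides of the inequality and yields the asserted sharp threshold $s_0=d/2-d/q+\beta/q$.
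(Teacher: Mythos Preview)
Your proposal is not a proof but a plan with an explicitly acknowledged gap: you correctly observe that with the low-frequency choice $A_\lambda=\{|\xi|\le c\lambda^{1/m}\}$ and the time window $t\lesssim\lambda^{-1}$, the displacement $t\Theta$ is contained in a ball of radius $\lambda^{-1}\ll\lambda^{-1/m}$, so the good set $E_\lambda$ collapses to a single ball and no $\beta$-dependent gain is visible. Your two suggested repairs (superposition of caps indexed by a net of $\Theta$, or an anisotropic cap at intermediate scale) are plausible in spirit but are not carried out, and in either case the bookkeeping you allude to is nontrivial; so as written the argument does not establish the claim.

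The paper resolves this obstacle by a much simpler device that you have missed: take the frequency box at scale $\lambda$ (not $\lambda^{1/m}$), namely $D=[0,c\lambda]^d$, and put a \emph{time-shifting twist} on the data, $\widehat f(\xi)=e^{-i|\xi|^m}\chi_{D}(\xi)$. After the change of variables $x=-y$, $t=1-\tau$, the phase becomes
\[
-(y-\theta)\cdot\xi-\tau\,\theta\cdot\xi-\tau|\xi|^m,
\]
so the effective time is $\tau$ near $0$ while the displacement is $t\theta\approx\theta$, which is of order $1$. Now choose $\Theta$ to be (a product with) an $r$-Cantor set with $\lambda=r^{-k}$, take $y$ in the $k$-th pre-Cantor set $\mathfrak C_k(r)$ (a union of $2^k$ intervals of length $\lambda^{-1}$), and pick $\theta(y)\in\Theta$ with $|y-\theta(y)|\le\lambda^{-1}$ and $\tau\in(0,\lambda^{-m})$; each phase term is then $O(c)$. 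This gives $|E_\lambda|\gtrsim 2^k\lambda^{-d}=\lambda^{\beta-d}$ directly, and comparing with $\|f\|_{H^s}\sim\lambda^{s+d/2}$ yields $s\ge d/2-d/q+\beta/q$. The point is that the twist lets you evaluate the maximal operator near $t=1$, where the $\Theta$-translation acts at unit scale and the fractal structure is faithfully transferred to $x$-space---this replaces both of your proposed fixes with a one-line change to the initial data.
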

\begin{remarks}
\begin{enumerate}[(i)]
\item As alluded to earlier, Theorem \ref{t:nec fractal path} in the case when $(d,q)=(1,4)$ and $m>1$ shows that \eqref{i:max fractal 1} fails if $s<\frac{1+\beta(\Theta)}{4}$, and the case when $(d,q,m)=(2,3,2)$ shows that  \eqref{i:max fractal 2} fails if $s<\frac{1+\beta(\Theta)}{3}$.

\item Since we do not know whether the standard step to deduce (pointwise convergence) $\Rightarrow$ (maximal estimate) by Stein's maximal principle in this variant of convergence, there is, unfortunately, no conclusion for pointwise convergence result from Theorem \ref{t:nec fractal path}. (If we assume that Stein's maximal principle was carried in this setting with a set of fractal lines, the valid range of the exponent $q$ might be $1\leq q\leq2$ anyway.)


\end{enumerate}
\end{remarks}

\begin{proof}
For a fixed $r\in (0,\frac12)$ define the $r$-Cantor set $\C(r)$ by taking the intersection all generations of the pre-Cantor sets $\C_k(r)$ for each non-negative $k\in \mathbb Z$ (i.e. $\C(r)=\bigcap_{k=0}^\infty\C_k(r)$), where 
$\C_k(r)$ are inductively generated as follows: Starting with $\C_0(r)=[0,1]$, we remove the interval of length $1-2r$ from the middle of $[0,1]$ and denote the remaining $2$ intervals together by $\C_1(r)$. Similarly, we remove the interval of length $r(1-2r)$ from the middle of each interval of $\C_1(r)$ and denote the remaining $2^2$ intervals together by $\C_2(r)$, and so on. Note that, by following the construction, $\C_k(r)$ consists of disjoint $2^k$ intervals of length $r^k$, each of which we let $\Omega_{k,j}$ so that $\C_k(r)=\bigcup_{j=1}^{2^k} \Omega_{k,j}$ ($|\Omega_{k,j}|=r^{k}$) and $\C_k(r)\supset\C_{k+1}(r)$. One of crucial properties of $\C(r)$ in this context is $\dim_M\C(r)=\frac{\log 2}{\log1/r}\in(0,1)$ (Appendix \ref{a:Cantor sets}).

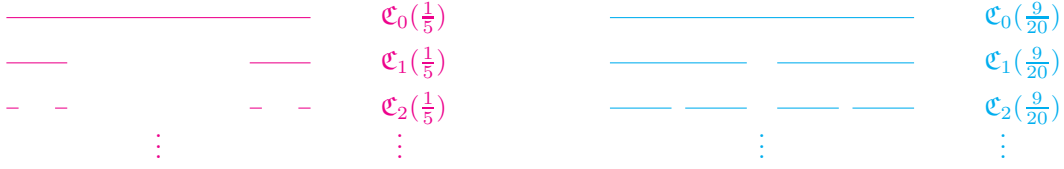
\begin{figure}[h]
\begin{center}
\begin{minipage}{.48\linewidth}
\centering
\begin{tikzpicture}[scale=4]
\textcolor{magenta}{
\draw (0,1.65)--(1,1.65);
\draw (0,1.5)--(0.2,1.5);
\draw (1-0.2,1.5)--(1,1.5);
\draw (0,1.35)--(0.2^2,1.35);
\draw (0.2-0.2^2,1.35)--(0.2,1.35);
\draw (1-0.2,1.35)--(1-0.2+0.2^2,1.35);
\draw (1-0.2^2,1.35)--(1,1.35);
\node at (0.5,1.25) {$\vdots$};
\node [right] at (1.2,1.65) {$\C_0(\frac15)$};
\node [right] at (1.2,1.5) {$\C_1(\frac15)$};
\node [right] at (1.2,1.35) {$\C_2(\frac15)$};
\node [right] at (1.25,1.25) {$\vdots$}; 
}
\end{tikzpicture}

\end{minipage}
\hfill
\begin{minipage}{.48\linewidth}
\centering
\begin{tikzpicture}[scale=4]
\textcolor{cyan}{
\draw (0,1.65)--(1,1.65);
\draw (0,1.5)--(0.45,1.5);
\draw (1-0.45,1.5)--(1,1.5);
\draw (0,1.35)--(0.45^2,1.35);
\draw (0.45-0.45^2,1.35)--(0.45,1.35);
\draw (1-0.45,1.35)--(1-0.45+0.45^2,1.35);
\draw (1-0.45^2,1.35)--(1,1.35);
\node at (0.5,1.25) {$\vdots$};
\node [right] at (1.2,1.65) {$\C_0(\frac{9}{20})$};
\node [right] at (1.2,1.5) {$\C_1(\frac{9}{20})$};
\node [right] at (1.2,1.35) {$\C_2(\frac{9}{20})$};
\node [right] at (1.25,1.25) {$\vdots$}; 
}

\end{tikzpicture}

\end{minipage}

\end{center}
\caption{Each generation of pre-Cantor sets associated with \textcolor{magenta}{$\C(\frac15)$} and \textcolor{cyan}{$\C(\frac{9}{20})$}.}
\end{figure}

For sufficiently large $k\gg1$, let $\lambda=\lambda_k:=r^{-k}$. Let $s<\frac d2-\frac dq+\frac{\beta(\Theta)}{q}$ and suppose that the stated inequality held. In order to see the main idea (based on the choice in the second row of the table on page 4), first let us deal with the case when $d=1$. Set $\Theta=\C(r)$ and the initial data $f$ by
\begin{equation}\label{e:initial necessary}
\widehat{f}(\xi)
=
e^{-i|\xi|^m}\chi_{D_1}(\xi),\quad D_1=[0,c\lambda]
\end{equation}
so that $\beta(\Theta)=\frac{\log 2}{\log1/r}\in(0,1)$ and $\|f\|_{H^s(\mathbb R)}\lesssim \lambda^{s}|\,D_1|^{\frac12}=\lambda^{s+\frac12}$. By the change of variables; $x=-y$ and $t=1-\tau$, it would follow that 
\begin{align*}
\sup_{\substack{t\in \mathbb I\\\theta\in\Theta}}\big|S_t^m f(\varrho(-y,t,\theta))\big|
&=
\sup_{\substack{t\in \mathbb I\\\theta\in\Theta}}
\left|
\int_{D_1} e^{i((-y+t\theta)\xi+t|\xi|^m)}e^{-i|\xi|^m)}\,\d\xi
\right|\\
&\geq
\sup_{\substack{\tau\in [0,1]\\\theta\in\Theta}}
\left|
\int_{D_1} e^{i(-(y-\theta)\xi-\tau\theta\xi-\tau|\xi|^m)}\,\d\xi
\right|\\
&\geq
\left|
\int_{D_1} e^{i(-(y-\theta(y))\xi-\tau(y)\theta(y)\xi-\tau(y)|\xi|^m)}\,\d\xi
\right|.
\end{align*}
Now, to ensure the phase is fairly small, we specify $(x,t)$. If we let $y\in \C_k(r)$ then we can find $\theta(y)\in\Theta$ satisfying $|y-\theta(y)|<\lambda^{-1}$ (for instance, take the endpoints of each interval $\Omega_{k,j}$, which are also in $\Theta$). Hence, for $\tau\in(0,\lambda^{-m})$, $y\in\C_k(r)$ and such $\theta(y)\in\Theta$, the phase is bounded above by
\[
|(y-\theta(y))\xi+\tau\theta(y)\xi+\tau|\xi|^m|
\leq\frac12,
\]
which implies that 
\begin{align*}
\Big\|\sup_{\substack{t\in \mathbb I\\\theta\in\Theta}}\big|S_t^mf(\varrho(x,t,\theta))\big|\Big\|_{L^q(\mathbb I)}
&\gtrsim
\Big\|\sup_{\substack{\tau\in (0,\lambda^{-m})\\\theta\in\Theta}}\big|S_t^mf(\varrho(y,t,\theta))\big|\Big\|_{L^q(\C_k(r))}\\
&\gtrsim
|\,D_1|
\left(
\sum_{j=0}^{2^k}\int_{\Omega_{k,j}}\,\d y
\right)^\frac1q\\
&\sim
(2^k)^\frac1q\lambda^{1-\frac1q}.
\end{align*}
Since $2^k=(r^{-k})^{\beta(\C(r))}=\lambda^{\beta(\C(r))}$, we would obtain $\lambda^{\frac12-\frac1q+\frac{\beta(\Theta)}{q}-s}\le C$  for some constant $C$. This is a contradiction as $\lambda\to\infty$.

For the remaining cases where $\beta(\Theta)=0$, $1$, one may let $\Theta=\{0\}$, $\mathbb I$, respectively, which are easily dealt with.  Indeed, the former coincides with the classical well-understood situation, and for the latter one can follow the argument above by setting $\theta(y)=y$ for all $y\in \mathbb I$.

Next, we shall consider the case when $d=2$ and basically modify the above argument. For the case of $\beta(\Theta)\in(0,1)$, we set $\Theta=\C(r)\times\{0\}$ and the initial data $f$ satisfying 
\begin{equation}\label{initial data necessary}
\widehat{f}(\xi)=e^{-i|\xi|^m}\chi_{D_2}(\xi),\quad D_2=[0,c\lambda]^2
\end{equation}
so that $\beta(\Theta)=\frac{\log2}{\log 1/r}\in (0,1)$ and $\|f\|_{H^s(\mathbb R^2)}\lesssim\lambda^s|\,D_2|^\frac12$. A similar change of variables gives that 
\[
\sup_{\substack{t\in \mathbb I\\\theta\in\Theta}}\big|S_t^m f(\varrho(-y,t,\theta))\big|
\geq
\left|
\int_{D_2} e^{i\phi(\xi,y,t,\theta)}
\right|,
\]
where 
\[
|\phi(\xi,y,t,\theta)|
=
|
(y_1-\theta_1(y))\xi_1+\tau\theta_1(y)\xi_1+(y_2-\theta_2(y))\xi_2+\tau\theta_2(y)\xi_2+\tau|\xi|^m
|.
\]
By choosing $y=(y_1,y_2)\in\C_k(r)\times[0,\lambda^{-1}]$ and $\tau\in(0,\lambda^{-m})$, we may have $|y_i-\theta_i(y)|<\lambda^{-1}$ for $i=1,2$, which guarantees that phase is small enough over $\xi\in D_2$, namely, $|\phi(\xi,y,t,\theta)|\leq\frac12$. Hence, 
\begin{align*}
\Big\|\sup_{\substack{t\in \mathbb I\\\theta\in\Theta}}\big|S_t^mf(\varrho(x,t,\theta))\big|\Big\|_{L^q(\mathbb B^2)}
&\gtrsim
\Big\|\sup_{\substack{\tau\in (0,\lambda^{-m})\\\theta\in\Theta}}\big|S_t^mf(\varrho(y,\tau,\theta))\big|\Big\|_{L^q(\C_k(r)\times[0,\lambda^{-1}])}\\
&\gtrsim
(2^k\lambda^{-2})^{\frac1q}|\,D_2|.
\end{align*}
Therefore, by assuming the sated estimate, we would obtain 
\[
(2^k\lambda^{-2})^{\frac1q}|\,D_2|\lesssim\lambda^s|\,D_2|^\frac12.
\]
By recalling $\lambda=r^{-k}$, $2^k=\lambda^{\beta(\Theta)}$ and $|D_2|\sim \lambda^2$, this yields a contradiction as $\lambda\to0$.

For the case when $\beta(\Theta)\in(1,2)$,  we set $\Theta=\C(r)\times[0,1]$ and employ the initial data given by \eqref{initial data necessary} so that the same computation reveals that the modulus of the corresponding phase $\phi(\xi,x,t,\theta)$ is bounded above by, say, $\frac12$ if we choose $y=(y_1,y_2)\in\C_k(r)\times[0,1]$ and $\tau\in (0, \lambda^{-m})$. Therefore, in this case, one may obtain
\[
(2^k\lambda^{-1})^{\frac1q}|\,D_2|\lesssim \lambda^s|\,D_2|^{\frac12},
\]
which result makes a contradiction.

The remaining are the cases where $\dim_M\Theta$ has the integers such as $0$, $1$, $2$ and may be considered similar to ones in the case of $d=1$. For instance, let $\Theta=\{0\}\times\{0\}$, $[0,1]\times\{0\}$, $[0,1]^2$, respectively. 

When $d\geq3$, a similar argument may be carried as well, which we omit the details.

\end{proof}

Theorem \ref{t:nec fractal path} encourages us to pursue further generalizations of Theorem \ref{t:max fractal} with respect to the $\alpha$-dimensional measure $\d \mu$.  By combining the argument in \cite{Sh19} with the one in \cite{CS20}, one may deduce the following.
\begin{theorem}\label{t:max fractal alpha-dim}
Let $d=1$, $m>1$ and $q\geq2$. If $s>\min\{\frac12,\max\{\frac14+\frac{\beta(\Theta)}{4\alpha},\frac12+\frac{\beta(\Theta)-\alpha}{q}\}\}$, then there exists $C>0$ such that 
\[
\|S_t^m f(\varrho(x,t,\theta))\|_{L_x^q(\mathbb I,\d\mu)L_t^\infty(\mathbb I)L_\theta^\infty(\Theta)}
\leq
C
\|f\|_{H^s(\mathbb R)}
\]
for all $f\in H^s(\mathbb R)$.
\end{theorem}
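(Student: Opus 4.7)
The plan is to reduce via Littlewood--Paley decomposition to a single-frequency estimate and then carry out a discretization-of-$\Theta$ argument adapted from \cite{Sh19} in the $L^{4\alpha}(\mu)$ space, with the per-direction bound supplied by a suitable $L^{4\alpha}(\mu)$-Carleson estimate generalizing \cite{CS20}. This gives both candidate exponents of the stated max in a unified way, depending on whether we upgrade from $L^{4\alpha}(\mu)$ to $L^q(\mu)$ via H\"older (when $q\leq 4\alpha$) or via interpolation against $L^\infty$ (when $q>4\alpha$).

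Concretely, for $f_\lambda$ frequency-localized at $|\xi|\sim\lambda$, cover $\Theta$ by $N\lesssim\lambda^{\beta(\Theta)}$ intervals of length $\lambda^{-1}$ with centers $\theta_i$. For each fixed $\theta_i$, the map $(x,t)\mapsto S_t^m f_\lambda(x+t\theta_i)$ is a one-parameter dispersive propagator with phase $t(\theta_i\xi+|\xi|^m)$, and the crucial per-direction estimate needed is
\[
\bigl\|S_t^m f_\lambda(\cdot+t\theta_i)\bigr\|_{L^{4\alpha}(\mu)L^\infty_t}\lesssim\lambda^{1/4}\|f_\lambda\|_{L^2}\qquad(\alpha\in[1/2,1]),
\]
which should follow by complex interpolation between the classical $L^4(dx)$ Carleson estimate ($\alpha=1$) and the $L^2(\mu)$ estimate of \cite{CS20} at $\alpha=1/2$. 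A union bound in $L^{4\alpha}(\mu)$ then contributes $N^{1/(4\alpha)}=\lambda^{\beta(\Theta)/(4\alpha)}$, giving
\[
\bigl\|\sup_{t,\theta}|S_t^m f_\lambda(\varrho(x,t,\theta))|\bigr\|_{L^{4\alpha}_x(\mu)}\lesssim\lambda^{1/4+\beta(\Theta)/(4\alpha)}\|f_\lambda\|_{L^2}.
\]

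From this single-frequency bound, H\"older's inequality on the finite measure space $(\mathbb I,\mu)$ extends it to any $L^q(\mu)$ with $q\leq 4\alpha$, producing the threshold $\frac14+\frac{\beta(\Theta)}{4\alpha}$ (the first piece of the max). For $q>4\alpha$, interpolating against the trivial bound $\|S_t^m f_\lambda\|_{L^\infty}\lesssim\lambda^{1/2}\|f_\lambda\|_{L^2}$ via $\|F\|_{L^q(\mu)}\leq\|F\|_{L^\infty}^{1-4\alpha/q}\|F\|_{L^{4\alpha}(\mu)}^{4\alpha/q}$ produces the threshold $\frac12+\frac{\beta(\Theta)-\alpha}{q}$ (the second piece). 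The regime $\alpha<1/2$ is handled analogously, anchored on the \cite{CS20} bound $\lambda^{(1-\alpha)/2}$ at $L^2(\mu)$ in place of the $L^{4\alpha}(\mu)$ bound, and always falls in the "interpolation-against-$L^\infty$" regime since then $4\alpha<2\leq q$. Summing over dyadic frequencies with an $\epsilon$-margin via Cauchy--Schwarz then yields the theorem.

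The main obstacle is the $L^{4\alpha}(\mu)$-Carleson estimate $\lambda^{1/4}$ for $\alpha\in(1/2,1)$: while the two endpoints are in hand, interpolating in the dimension parameter $\alpha$ (with both the measure $\mu$ and the Lebesgue exponent $4\alpha$ varying simultaneously) is a genuine Stein-type analytic interpolation, most likely carried out by adapting the underlying $TT^*$ argument of \cite{CS20} to an analytic family of weighted norms. Once this endpoint is available, the remaining union bound, H\"older/interpolation, and dyadic summation are routine.
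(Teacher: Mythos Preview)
Your overall architecture (Littlewood--Paley plus a covering of $\Theta$) matches the paper's, and your arithmetic for combining the pieces is correct. The genuine gap is the $L^{4\alpha}(\mu)$ per-direction bound $\lambda^{1/4}$ for $\alpha\in(\tfrac12,1)$. The interpolation you propose---between $L^4(\d x)$ at $\alpha=1$ and $L^2(\d\mu)$ at $\alpha=\tfrac12$---is not available: these live on different measure spaces, and a general $\alpha$-dimensional $\mu$ carries no analytic structure in the parameter $\alpha$ to exploit. Stein interpolation requires an analytic family on a fixed pair of measure spaces; here nothing connects an arbitrary $\mu$ to Lebesgue measure. As you yourself suspect at the end, one must run the $TT^*$ argument directly against $\mu$.

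The paper does exactly this, but in a way that renders your H\"older/$L^\infty$-bootstrap unnecessary. Proposition~\ref{p:fractal suff} establishes the per-interval bound \emph{directly in $L^q(\mu)$} for every $q\ge2$, governed by a single parameter $s_*=\min\{\tfrac14,\tfrac\alpha q\}$ which simultaneously fixes the covering scale $\lambda^{-qs_*/\alpha}$ and the exponent $\lambda^{1/2-s_*}$. The $TT^*$ kernel is decomposed into three regions (near-diagonal; $|x-x'|\lesssim|t-t'|$; $|x-x'|\gg|t-t'|$), van der Corput supplies the oscillatory decay, and a Young/Hardy--Littlewood--Sobolev-type inequality for $\alpha$-dimensional measures (Lemma~\ref{l:Young and HLS-type}) closes each piece; the constraint $qs_*\le\alpha$ is precisely what makes the HLS exponent admissible. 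Your desired $L^{4\alpha}(\mu)$ estimate is the special case $q=4\alpha$ of this proposition (where indeed $s_*=\tfrac14$ and the covering scale is $\lambda^{-1}$), so once the $TT^*$ is carried out your route would also succeed---but the same argument run at the target $q$ already delivers both pieces of the $\max$ in one stroke, without the case split $q\lessgtr4\alpha$. One smaller point: the per-piece estimate must be stated for an \emph{interval} $\Omega$ of directions, not a single center $\theta_i$; reducing $\sup_{\theta\in\Omega}$ to a fixed direction is not free, and the paper genuinely uses $|\theta-\theta'|\le|\Omega|$ inside the kernel analysis (see~\eqref{i:compare theta and x}).
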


Note that the lower bound of $s$ is $\frac12$ if $\alpha<\beta(\Theta)$ and $\max\{\frac14+\frac{\beta(\Theta)}{4\alpha},\frac12+\frac{\beta(\Theta)-\alpha}{q}\}$ if $\beta(\Theta)\leq\alpha\leq 1$. Since the function $q\mapsto \frac12+\frac{\beta-\alpha}{q}$ is decreasing when $\alpha\geq\beta$, the choice of $q=2$ minimizes the lower bound of $s$ so that the poinwise convergence 
\[
\lim_{t\to0}S_t^m f(\varrho(x,t,\theta))=f(x),\quad \text{$\mu$-\ae}
\]
holds for all $f\in H^s$ whenever $s>\min\{\frac12,\max\{\frac14+\frac{\beta(\Theta)}{4\alpha}, \frac{1+\beta(\Theta)-\alpha}{2}\}\}$. Moreover, by Frostman's lemma one may find that $\sup_{f\in H^s}\dim_H \D(f\circ\varrho)\leq\{\frac{\beta(\Theta)}{4s-1},1+\beta(\Theta)-2s\}$ for $s\in [\frac{1+\beta(\Theta)}{4},\frac12]$ (see Figure \ref{f:dimH fractal}). 
\colorlet{Green}{green!70!black!}
\begin{figure}[h]
\begin{center}
\begin{tikzpicture}[scale=5]

\coordinate (O) at (0,0);
\draw [->](O)--(0,1.2);
\draw [->,name path=xaxis](O)--(2.2,0);

\node [above]at  (0,1.2) {$\alpha$};
\node [right]at  (2.2,0) {$s$};

\coordinate (Q) at (0.7,0);

\draw  [dotted](Q)--([yshift=1.2cm]Q);
\draw  [dotted,name path=v1, thick,]([xshift=0.7cm]Q)--([xshift=0.7cm,yshift=1.2cm]Q);

\draw [dotted, name path=h2] (0,1/2)--(2.2,1/2);

\path [name path=h1] (0,1)--(1,1);

\begin{scope}
\clip (0,0)--(2.2,0)--(2.2,1.2)--(0,1.2);
\draw [name path global=a,color=Green, domain=0.01:2, dashed,xshift=0.7cm] plot [samples=100](\x,1/\x*1/7);
\end{scope}

\path [name intersections={of= a and h2, by={M1}}];
\path [name intersections={of= a and v1, by={M2}}];
\path [name intersections={of= a and h1, by={M3}}];

\coordinate (c) at (1.8,0);

  \path let \p1=(M1), \p2=(M2), \p3=(c) in
  \pgfextra{\pgfmathsetmacro{\y}{(\y1-\y2)/(\x1-\x2)*(\x3-\x2)+\y2}}
   (c |- 0,\y pt) [name path=l]coordinate (d);
  \path [name path=l1] (M1)--(d);

\path [name intersections={of=l1 and xaxis, by={B}}]; 
    
\draw [red](M1)--(M2);
\draw [red,dashed] (M2)--(B);

\begin{scope}
\clip (M3|-O)--(M1|-O)--(M1)--(M3);
\draw [color=Green, domain=0.01:1,xshift=0.7cm] plot [samples=100](\x,1/\x*1/7);
\end{scope}   

\draw [dotted] (M1|-O)--([yshift=1.2cm]M1|-O);
\draw [dotted] (O|-M2)--([xshift=2.2cm]O|-M2);
\draw [blue] (0,1)--([yshift=1cm]Q);
\draw [dotted] ([yshift=1cm]Q)--(M3);
\draw [dotted] (M3|-O)--([yshift=1.2cm]M3|-O);

\node [below left] at (O) {$O$};
\node [left] at (O|-M2) {$\beta(\Theta)$};
\node [left] at (O|-M1) {$\frac12$};
\node [left] at (O|-M3) {$1$};
\node [below] at (Q) {$\frac14$};
\node [below] at (M3|-O) {$\frac{1+\beta(\Theta)}{4}$};
\node [below] at (M2|-O) {$\frac12$};
\node [below] at (B) {$\frac{1+\beta(\Theta)}{2}$};

\fill [white] ([yshift=1cm]Q) circle (0.5pt);
\draw ([yshift=1cm]Q) circle (0.5pt);
\fill [white] (M2) circle (0.5pt);
\draw (M2) circle (0.5pt);
\fill [white] (M3) circle (0.5pt);
\draw (M3) circle (0.5pt);

\end{tikzpicture}
\caption{The graph of $\alpha=\textcolor{blue}{1}$ when $s\in[0,\frac14)$ and $\alpha=\min\{\textcolor{Green}{\frac{\beta(\Theta)}{4s-1}},\textcolor{red}{1+\beta(\Theta)-2s}\}$ when $s\in(\frac{1+\beta(\Theta)}{4},\frac12)$.}\label{f:dimH fractal}
\end{center}
\end{figure}
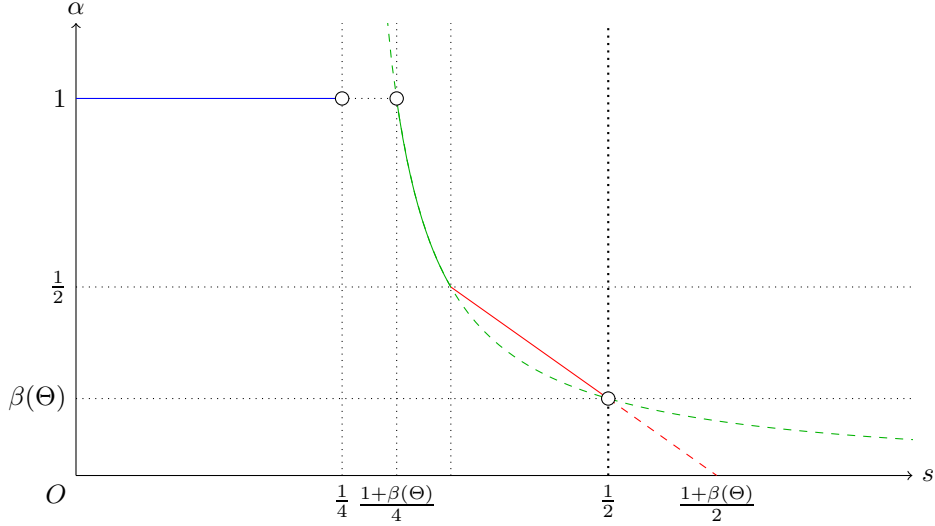
\begin{proof}
For a given $\Theta$, let us write $\beta$ for the shorthand for $\beta(\Theta)$.
Since $s>\frac12$ is obtained trivially, it is enough\footnote{As we discussed in above, the conclusion $s>\frac12$ when $\alpha<\beta$ also naturally appears by running an analogous argument carefully arranged for the case of $\alpha\geq\beta$.} to focus on the case $\alpha\geq\beta$.
Let $s_*=\min\{\frac14,\frac{\alpha}{q}\}$. 
The proof is based on a combination of arguments in \cite{Sh19} and \cite{CS20}. The following proposition has a crucial role in the proof, whose essential idea was introduced in \cite{Sh19}. 
\begin{proposition}\label{p:fractal suff}
Let
$\lambda\geq1$, $q\geq2$ and $\Omega$ be an interval of length $\lambda^{-\frac{qs_*}{\alpha}}$. For arbitrarily small $\varepsilon>0$, it holds that
\begin{equation}\label{e:fractal suff}
\Big\|\sup_{\substack{t\in[-1,1]\\ \theta\in\Omega}}\big|S_t^mf(\varrho(\cdot,t,\theta))\big|\Big\|_{L^q(\d\mu)}
\lesssim
\lambda^{\frac12-s_*+\varepsilon}\|f\|_{L^2}
\end{equation}
whenever $f$ is supported in $\{|\xi|\sim\lambda\}$. 
\end{proposition}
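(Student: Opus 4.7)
The plan is to control the joint $(\theta,t)$-supremum via a fractional Sobolev embedding in $\theta$, anchored by a fixed-$\theta$ maximal bound. First, I would record the \emph{fixed-}$\theta$ estimate: for each $\theta$ in a bounded range and for $f$ Fourier-localized at frequency $\lambda$,
\[
\|\sup_{t\in\mathbb I}|S_t^mf(\cdot+t\theta)|\|_{L^q(\d\mu)} \lesssim \lambda^{\frac{1}{2}-s_*}\|f\|_{L^2}.
\]
Since $g_\theta(x,t)=S_t^mf(x+t\theta)$ solves a Schr\"odinger-type equation with the bounded perturbation $|\xi|^m+\theta\xi$ of the original symbol, at $q=2$ this follows from Theorem~\ref{t:max tangential} (with $\kappa=1$), uniformly in bounded $\theta$; the general-$q$ version is obtained by interpolating the $q=2$ bound with the trivial pointwise bound $\sup_t|g_\theta|\le\lambda^{1/2}\|f\|_{L^2}$. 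Moreover, since $\partial_\theta g_\theta=it\cdot S_t^m(\xi\widehat f)(\cdot+t\theta)$, each $\theta$-derivative effectively produces the Fourier multiplier $t\xi$ of size $\lambda$, so applying the fixed-$\theta$ estimate to $\xi\widehat f$ yields, for any fractional order $\sigma\ge 0$,
\[
\sup_\theta\|\sup_t|\partial_\theta^\sigma g_\theta|\|_{L^q(\d\mu)}\lesssim \lambda^{\sigma+\frac12-s_*}\|f\|_{L^2}.
\]

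Next, for each fixed $(x,t)$ I would apply the fractional Sobolev embedding $\dot W^{\sigma,q}(\Omega)\hookrightarrow L^\infty(\Omega)$ (with $\sigma>\tfrac1q$) on the interval $\Omega$ of length $L=|\Omega|$:
\[
\sup_{\theta\in\Omega}|g_\theta(x,t)| \lesssim L^{-\frac1q}\|g_\theta(x,t)\|_{L^q_\theta(\Omega)} + L^{\sigma-\frac1q}\|\partial_\theta^\sigma g_\theta(x,t)\|_{L^q_\theta(\Omega)}.
\]
Taking the $L^q(\d\mu)$-norm in $x$, the supremum in $t$, and commuting $L^q(\d\mu)$ with $L^q_\theta$ via Fubini absorbs a factor $L^{1/q}$ in each summand and reduces the problem to the bounds of Step~1. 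The outcome is
\[
\Big\|\sup_{t,\theta}|g_\theta|\Big\|_{L^q(\d\mu)} \lesssim \big(1+(L\lambda)^\sigma\big)\lambda^{\frac12-s_*}\|f\|_{L^2}.
\]
With the hypothesized scale $L=\lambda^{-qs_*/\alpha}$, one has $L\lambda=\lambda^{1-qs_*/\alpha}$; picking $\sigma=\tfrac1q+\delta$ with $\delta>0$ small yields an overhead of $\lambda^{(1/q+\delta)(1-qs_*/\alpha)}$, which is $\le\lambda^{\varepsilon}$ for any prescribed $\varepsilon>0$ whenever the exponent of $\lambda$ in $(L\lambda)^\sigma$ is small.

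The main obstacle is that this argument is genuinely clean only in the regime $q\ge 4\alpha$, where $s_*=\alpha/q$ forces $L\lambda=1$ exactly; in the complementary regime $q<4\alpha$ we have $s_*=\tfrac14$ and $L\lambda=\lambda^{1-q/(4\alpha)}$ is polynomially large, so the naive Sobolev overhead loses $\lambda^{1/q-1/(4\alpha)}$. To close this gap I would follow the strategy of \cite{Sh19}: decompose $\Omega$ into Knapp-scale caps of length $\lambda^{-1}$ (on which $\theta$-variation of $g_\theta$ is negligible, since $\theta\mapsto g_\theta(x,t)$ is effectively Fourier-bandlimited at scale $\lambda$), apply the fixed-$\theta$ bound on each cap, and sum using $\ell^q$-orthogonality to absorb the excess $(L\lambda)^{1/q}$ into the $\lambda^\varepsilon$-loss. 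Verifying this refined step --- together with the general-$q$ version of the fixed-$\theta$ bound by adapting the $\alpha$-dimensional Carleson argument of \cite{CS20} --- is the technical heart of the proof.
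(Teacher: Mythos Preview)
There are two genuine gaps, both in the range $\alpha>\tfrac12$ (for $\alpha\le\tfrac12$ one always has $s_*=\alpha/q$, hence $L\lambda=1$, and your scheme does go through). First, the interpolation for the fixed-$\theta$ bound does not give what you claim: interpolating the $L^2(\d\mu)$ estimate (exponent $\tfrac12-\min\{\tfrac14,\tfrac\alpha2\}$) with the trivial $L^\infty$ bound yields the exponent $\tfrac12-\min\{\tfrac1{2q},\tfrac\alpha q\}$, which for $\alpha>\tfrac12$ and $q>2$ is strictly worse than the required $\tfrac12-s_*=\tfrac12-\min\{\tfrac14,\tfrac\alpha q\}$. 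So already the input to your Sobolev scheme is missing in the main case $\alpha=1$, $q>2$. Second, and more seriously, the proposed ``$\ell^q$-orthogonality'' repair for $q<4\alpha$ cannot work: decomposing $\Omega$ into $N\sim L\lambda$ caps of width $\lambda^{-1}$ and applying the fixed-$\theta$ bound on each gives
\[
\Big(\sum_j\big\|\sup_t|g_{\theta_j}|\big\|_{L^q(\d\mu)}^q\Big)^{1/q}\lesssim N^{1/q}\lambda^{\frac12-s_*}\|f\|_{L^2},
\]
the \emph{same} $(L\lambda)^{1/q}$ overhead you were trying to remove. There is nothing to sum orthogonally---the $g_{\theta_j}$ are translates built from a single $f$, not an almost-orthogonal decomposition of it---and the argument in \cite{Sh19} is not of this type.

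The paper instead runs a direct $TT^*$ argument: one dualizes to $\|T^*F\|_{L^2}^2\lesssim\lambda^{1-2s_*+\varepsilon}\|F\|_{L^{q'}_x(\d\mu)L^1_tL^1_\theta}^2$, expands the left side as a double integral with oscillatory kernel $K_\lambda(w,w')$, splits into regions according to the relative sizes of $|x-x'|$, $\lambda^{-qs_*/\alpha}$ and $|t-t'|$, and bounds $|K_\lambda|$ via van der Corput's lemma together with a Young/Hardy--Littlewood--Sobolev-type inequality for $\alpha$-dimensional measures (Lemma~\ref{l:Young and HLS-type}). The hypothesis $|\Omega|=\lambda^{-qs_*/\alpha}$ enters only to force $|\theta-\theta'|<\tfrac12|x-x'|$ on the off-diagonal region, so that $|\varrho(w)-\varrho(w')|\sim|x-x'|$ and the kernel bound becomes independent of $\theta,\theta'$; this is what absorbs the $L^\infty_\theta$ with no loss, uniformly in $(q,\alpha)$.
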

First of all, we assume that Proposition \ref{p:fractal suff} holds true and prove Theorem \ref{t:max fractal alpha-dim}. By using the dyadic decomposition with respect to frequency, we have
\[
\Big\|\sup_{\substack{t\in(-1,1)\\\theta\in\Theta}}\big|S_t^mf(\varrho(x,t,\theta))\big|\Big\|_{L^q}
\leq
\sum_{k\geq0}\Big\|\sup_{\substack{t\in(-1,1)\\\theta\in\Theta}}\big|S_t^mP_kf(\varrho(x,t,\theta))\big|\Big\|_{L^q}.
\]
For each $k$, by the definition of Minkowski dimension, one can find a finite collection of intervals $\{\Omega_{k,j}\}_{j=1}^{N_k}$ such that 
\[
\Theta\subset \bigcup_{j=1}^{N_k}\Omega_{k,j},\quad|\Omega_{k,j}|\leq(2^{-k})^{\frac{qs_*}{\alpha}}.
\]
Here, $N_\delta$ represents the smallest number of intervals of length $\delta$ that covers $\Theta$, and $N_k=N_{\delta_k}(\Theta)$ with $\delta_k=(2^{-k})^{\frac{qs_*}{\alpha}}$ in particular.
Thus, for each $k$,
\[
\sup_{\substack{t\in(-1,1)\\\theta\in\Theta}}\big|S_t^mP_kf(\varrho(x,t,\theta))\big|^q
\leq
\sum_{j=1}^{N_k}\sup_{\substack{t\in(-1,1)\\\theta\in\Omega_{k,j}}}\big|S_t^mP_kf(\varrho(x,t,\theta))\big|^q,
\]
from which it follows that
\[
\Big\|\sup_{\substack{t\in(-1,1)\\\theta\in\Theta}}\big|S_t^mP_kf(\varrho(x,t,\theta))\big|\Big\|_{L^q}
\leq
\left(
\sum_{j=1}^{N_k}\Big\|\sup_{\substack{t\in(-1,1)\\\theta\in\Omega_{k,j}}}|S_t^mP_kf(\varrho(x,t,\theta))\big|\Big\|_{L^q}^q
\right)^\frac1q.
\]
Invoking Proposition \ref{p:fractal suff} with $\lambda=2^k$, and the fact $N_k\lesssim(2^k)^{\frac{qs_*}{\alpha}\beta+\epsilon}$ for any $\epsilon>0$ (from the definition of Minkowski dimension), we obtain
\begin{align*}
\Big\|\sup_{\substack{t\in(-1,1)\\\theta\in\Theta}}\big|S_t^mf(\varrho(x,t,\theta))\big|\Big\|_{L^q}
&\leq
\sum_{k\geq0}
\left(
\sum_{j=1}^{N_k}\Big\|\sup_{\substack{t\in(-1,1)\\\theta\in\Omega_{k,j}}}\big|S_t^mP_kf(\varrho(\cdot,t,\theta))\big|\Big\|_{L^q}^q
\right)^\frac1q\\
&\lesssim
\|P_0f\|_{L^2}+\sum_{k\geq1}
\left(
\sum_{j=1}^{N_k}
\left[
(2^k)^{\frac12-s_*+\varepsilon}\|P_kf\|_{L^2}
\right]^q
\right)^\frac1q\\
&\lesssim
\|P_0f\|_{L^2}+\sum_{k\geq1}
(2^k)^{\frac12-s_*+\varepsilon+\frac{s_*}{\alpha}\beta}\|P_kf\|_{L^2}\\
&\lesssim
\|f\|_{H^{\frac12-(1-\frac{\beta}{\alpha})s_*+\varepsilon}},
\end{align*}
as aimed. \\

Now, we turn to the proof of the proposition.
Let the operator $T$ on $L^2$ be given by
\[
Tf(x,t,\theta)=\chi(x,t,\theta)\int_{\mathbb{R}}e^{i(\varrho(x,t,\theta)\xi+t|\xi|^m)}f(\xi)\psi(\tfrac\xi\lambda)\,\d\xi,
\]
where $\chi=\chi_{I\times I\times \Omega}$ and $\psi\in C_0^\infty((-2,-\frac12)\cup(\frac12,2))$. Then, by the Plancherel theorem and duality, \eqref{e:fractal suff} is equivalent to 
\begin{equation}\label{d:fractal suff}
\|T^*F\|^2_{L^2}\lesssim\lambda^{1-2s_*+\varepsilon}\|F\|_{L^{q'}_x(\d\mu)L^1_tL_\theta^1}^2,
\end{equation}
where $T^*$ is the adjoint of $T$.  
To estimate \eqref{d:fractal suff} we decompose $\|T^*F\|_{L^2}$ into $\mathcal I_1,\mathcal I_2,\mathcal I_3$ such that
\begin{align*}
\|T^*F\|_{L^2}^2
=
\mathcal I_1+\mathcal I_2+\mathcal I_3,
\end{align*}
where, under the notations $W=I\times I\times \Omega$, $w=(x,t,\theta)\in W$, $w'=(x',t',\theta')\in W$ and $\d_\mu w=\d\mu(x)\d t\d \theta$, we set
\[
\mathcal I_\ell:=\iint_{V_\ell}\chi(w)\chi(w')\bar{F}(w)F(w')K_\lambda(w,w')\,\d_\mu w\,\d_\mu w',
\]
\begin{align*}
K_\lambda(w,w')
:=
\int_\mathbb{R} e^{i\phi(\xi,w,w')}\psi(\tfrac{\xi}{\lambda})^2\,\d\xi
=
\lambda\int_\mathbb{R} e^{i\phi(\lambda\xi,w,w')}\psi(\xi)^2\,\d\xi,
\end{align*}
\[
\phi(\xi,w,w')=(\varrho(x,t,\theta)-\varrho(x',t',\theta'))\xi+(t-t')|\xi|^m
\]
and
\[
\begin{cases}
V_1=\{(w,w')\in W\times W:{|x-x'|\le2\lambda^{-\frac{q s_*}{\alpha}}}\},\\
V_2=\{(w,w')\in W\times W: |x-x'|>2\lambda^{-\frac{q s_*}{\alpha}}\ {\rm and}\  |x-x'|\le 4|t-t'|\},\\
V_3=\{(w,w')\in W\times W:|x-x'|>2\lambda^{-\frac{qs_*}{\alpha}}\ {\rm and}\ |x-x'|> 4|t-t'|\}.
\end{cases}
\]
Therefore, it is enough to show that for each $\ell=1,2,3$
\[
\mathcal I_\ell\lesssim \lambda^{1-2s_*+\varepsilon}\|F\|_{L^{q'}_x(\d\mu)L^1_tL^1_\theta}^2.
\]

The case when $\ell=1$ immediately follows from the trivial kernel estimate $|K_\lambda(w,w')|\lesssim\lambda$ and Lemma \ref{l:Young and HLS-type}. 

For $\I_2$, we shall observe that 
\begin{align}\label{second derivative}
\left|\frac{\d^2}{\d\xi^2}\phi(\lambda\xi,w,w')\right|
\gtrsim
\lambda^m|t-t'||\xi|^{m-2}
\gtrsim
\lambda|x-x'| \gtrsim \lambda^{1-\frac{qs_*}{\alpha}}
\ge
1
\end{align}
because of $\frac{qs_*}{\alpha}\le1$. One can apply van der Corput's lemma (Lemma \ref{lem:van der Corput}) to get
\begin{align*}
|K_\lambda(w,w')|
\lesssim
\lambda(\lambda|x-x'|)^{-\frac12}
\lesssim
\lambda(\lambda|x-x'|)^{-2s_*}
\lesssim
\textcolor{black}{\lambda^{1-2s_*+\varepsilon}|x-x'|^{-2s_*+\varepsilon}}
\end{align*}
from the separation assumption and the fact $2s_*\le\frac12$.
Therefore, applying Lemma \ref{l:Young and HLS-type} with $\rho=2s_*-\varepsilon<\frac{2\alpha}{q}$, it follows that
\[
\mathcal I_2\lesssim\lambda^{1-2s_*+\varepsilon}\|F\|_{L^{q'}_x(\d\mu)L^1_t}^2.
\]

Finally, for $\I_3$, note a key relation
\begin{equation}\label{i:gamma-gamma}
\textcolor{black}{|\varrho(w)-\varrho(w')|\sim|x-x'|}
\end{equation}
for $(w,w')\in V_3$ since
\begin{align}\label{i:compare theta and x}
|\theta-\theta'|\leq\lambda^{-\frac {qs_*}{\alpha}}<\frac12|x-x'|.
\end{align}

We split $K_\lambda$ into $\mathcal K_1$ and $\mathcal K_2$ as follows.
\begin{align*}
K_\lambda(w,w')&=\lambda\int_{U_1}e^{i\phi(\lambda\xi,w,w')}\psi(\xi)^2\,\d\xi+\lambda\int_{U_2}e^{i\phi(\lambda\xi,w,w')}\psi(\xi)^2\,\d\xi\\
&=:\mathcal K_1+\mathcal K_2,
\end{align*}
where 
\[
\begin{cases}
U_1=\{\xi\in\supp\psi:|x-x'|>8m\lambda^{m-1}|t-t'||\xi|^{m-1}\},\\
U_2=\{\xi\in\supp\psi:|x-x'|\le8m\lambda^{m-1}|t-t'||\xi|^{m-1}\}.
\end{cases}
\]
For $\mathcal K_1$,
we use \eqref{i:gamma-gamma} and $\frac{qs_*}{\alpha}\le1$ to estimate
\begin{align*}
\left|\frac{\d}{\d\xi}\phi(\lambda\xi,w,w')\right|&\ge\big|\lambda|\varrho(w)-\varrho(w')|-m\lambda^m|t-t'||\xi|^{m-1}\big|\\
&\gtrsim|\lambda|x-x'|-m\lambda^m|t-t'||\xi|^{m-1}|\\
&\gtrsim\lambda|x-x'|\gtrsim \lambda^{1-\frac{qs_*}{\alpha}}
\ge1.
\end{align*}
Notice that the interval $U_1$ consists of at most two intervals since $\frac{\d}{\d\xi}\phi(\lambda\xi,w,w')$ is monotone on each interval $(-\infty,-1]$ 
and $[1,\infty)$. Then, van der Corput's lemma  gives that 
\begin{align}\label{$K_1$estimate} 
\mathcal K_1
\lesssim
\lambda(\lambda|x-x'|)^{-1}.
\end{align}
On the other hand, for $\mathcal K_2$, we have \eqref{second derivative} again and apply van der Corput's lemma to obtain
\begin{align}\label{$K_2$estimate}
\mathcal K_2&\lesssim\lambda(\lambda|x-x'|)^{-\frac12}.
\end{align}

Combining \eqref{$K_1$estimate} and \eqref{$K_2$estimate}, for $(w,w')\in V_3$
\begin{align*}
|K_\lambda(w,w')|&\lesssim\lambda(\lambda|x-x'|)^{-\frac12}\lesssim\lambda(\lambda|x-x'|)^{-2s_*}\\
&\lesssim\lambda^{1-2s_*+\varepsilon}|x-x'|^{-2s_*+\varepsilon}
\end{align*}
from the separation assumption.
By Lemma \ref{l:Young and HLS-type} with $\rho=2s_*-\varepsilon$ we conclude that
\begin{align*}
\mathcal I_3
&\lesssim
\lambda^{1-2s_*+\varepsilon}\|F\|_{L^{q'}_x(\d\mu)L^1_tL^1_\theta}^2.
\end{align*}
\end{proof}

\begin{remark}
%
It is, of course, reasonable to generalize Theorem \ref{t:nec fractal path} in the context of $\alpha$-dimensional measure and hope that $s\geq\frac12+\frac{\beta-\alpha}{q}$ is also necessary. We found, however, that this may not be straightforward and note here that a weaker condition, $s\geq\frac12+\frac{\beta-1}{q}$, is necessary for $\alpha\in(0,1]$ when $d=1$: For $\alpha\in(0,1]$ by employing $\d\mu(x)=|x|^{\alpha-1}\d x$, instead of $\d x$, one may reach to
\begin{align*}
\|S_t^mf(\varrho(x,t,\theta)\|_{L_x^q(\mathbb I,\,\d\mu)L_t^\infty(\mathbb I)L_\theta^\infty(\Theta)}
&\gtrsim
\lambda\left(\sum_{j=1}^{2^k}\int_{\Omega_{k,j}}\,\d\mu(x)\right)^\frac1q\\
&\sim
\lambda\left(\sum_{j=1}^{2^k} \big[(y_{k,j}+\frac1\lambda)^\alpha-y_{k,j}^\alpha\big] \right)^\frac1q.
\end{align*}
Here, $(y_{k,j})_{j}$ represents the left-end points of intervals consisting of $k$-th generation of pre-Cantor set $\C_k(r)$; i.e.  $\Omega_{k,j}=[y_{k,j},y_{k,j}+\frac1\lambda]$. Then, the mean value theorem gives that 
\[
(y_{k,j}+\frac1\lambda)^\alpha-y_{k,j}^\alpha\gtrsim\frac1\lambda,
\]
which clearly gives what we claimed.

\end{remark}

There are many other variations of the maximal inequality for the (fractional) Schr\"odinger equation. In the classical higher dimensional cases, the maximal inequality \eqref{i:max classical} for radial initial data was considered by Prestini \cite{Pr90}, and later its fractal dimension of the divergence sets was computed by Bennett--Rogers \cite{BR12}. 
Some results when $m\in(0,1]$ are also known but appeared to possess a rather different nature than when $m>1$ (the reader may visit \cite{CS22, Cw82, HKL21, JY21, RV08, Wl95}).  The periodic setting (replacing $\mathbb R^d$ by the torus $\mathbb T^d$) is also intriguing. Moyua--Vega \cite{MV08} considered the one-dimensional case and obtained some sufficient conditions and necessary conditions, although there is still a gap between them remaining.  For the results in the higher dimensions, for example, see work by Wang--Zhang \cite{WZ19}, Eceizabarrena--Luc\`a \cite{EL20} as well as Compaan--Luc\`a--Staffilani \cite{CLS20}, where they also discuss the pointwise convergence problem for the solution to certain nonlinear Schr\"odinger equation.
Another interesting variation is due to Bez--Lee--Nakamura \cite{BLN19}; the local maximal inequality for orthonormal systems of initial data $(f_j)_j$. Their results in one spatial dimension teach the pointwise convergence behavior of finitely many fermion particles interacting with each other on a line. 
Bez--Kinoshita and the second author further computed the Hausdorff dimensions of the corresponding divergence sets as well \cite{BKS}.
Recently, Dimou and Seeger \cite{DS20} considered pointwise convergence problem along a time sequence that rapidly approaches to $0$. It turns out that such sequential convergence may require less smooth regularity than the original convergence. 
They obtained sharp results for the fractional Schr\"odinger equations in one dimension, which were extended to higher dimensions by Sj\"olin \cite{Sj19}, Sj\"olin and Str\"omberg \cite{SS20, SS21a, SS21b} and Li, Wang, and Yan \cite{LWY20b}. Later, Li, Wang, and Yan \cite{LWY22} and Ko, Koh, Lee and first author \cite{CKKL22} established sharp results for the fractional Schr\"odinger equations  and more general dispersive equations defined in higher dimensions by using spacial localization.

\begin{appendix}
\section{}\label{a:Cantor sets}

Recall the $r$-Cantor set whose construction was given in Section \ref{s:fractal}.
\begin{lemma}
For $r\in(0,\frac12)$,
\[
\dim_M \C(r)=\frac{\log 2}{\log1/r},
\]
ranged in $(0,1)$.
\end{lemma}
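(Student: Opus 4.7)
The plan is to bound $N_\delta(\mathcal{C}(r))$ above and below at the natural scale $\delta \sim r^k$ coming from the self-similar construction, and then invoke the definition of $\dim_M$.

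For the upper bound, I would use the obvious fact that $\mathcal{C}(r) \subset \mathcal{C}_k(r)$ for every $k$, and that $\mathcal{C}_k(r)$ is the disjoint union of $2^k$ intervals of length $r^k$. Hence for any $\delta \in [r^k, r^{k-1})$ one has $N_\delta(\mathcal{C}(r)) \le 2^k$. Given $\beta > \log 2/\log(1/r)$, i.e.\ $2 r^\beta < 1$, this yields
\[
N_\delta(\mathcal{C}(r))\, \delta^\beta \le 2^k r^{(k-1)\beta} = r^{-\beta}(2r^\beta)^k,
\]
which tends to $0$ as $\delta \to 0$ (equivalently, $k \to \infty$). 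By the definition of the Minkowski dimension, this gives $\dim_M \mathcal{C}(r) \le \log 2/\log(1/r)$.

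For the lower bound, the key observation is that any two distinct intervals in $\mathcal{C}_k(r)$ are separated by a gap of at least $(1-2r)r^{k-1}$ (the minimum being attained by sibling intervals produced by the most recent subdivision). Consequently, if $\delta < (1-2r)r^{k-1}$, then each closed interval of length $\delta$ meets at most one of the $2^k$ intervals comprising $\mathcal{C}_k(r)$, and since each such interval contains points of $\mathcal{C}(r)$ (e.g.\ its endpoints, which survive every later generation), at least $2^k$ intervals of length $\delta$ are needed to cover $\mathcal{C}(r)$. Choosing $\delta_k := \tfrac12(1-2r)r^{k-1}$ and taking $\beta < \log 2/\log(1/r)$, so that $2 r^\beta > 1$, we obtain
\[
N_{\delta_k}(\mathcal{C}(r))\, \delta_k^\beta \gtrsim 2^k r^{k\beta} = (2 r^\beta)^k \longrightarrow \infty,
\]
so $\limsup_{\delta\to 0} N_\delta(\mathcal{C}(r))\,\delta^\beta > 0$, whence $\dim_M \mathcal{C}(r) \ge \log 2/\log(1/r)$.

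Combining the two bounds yields the claimed equality. Finally, the containment in $(0,1)$ is immediate: since $r \in (0, \tfrac12)$, we have $\log(1/r) > \log 2 > 0$, so $0 < \log 2/\log(1/r) < 1$. The only mildly delicate step is justifying the lower bound on the gap in $\mathcal{C}_k(r)$, which is best done by induction on $k$ using the self-similarity of the construction; everything else is a direct computation.
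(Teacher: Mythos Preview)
Your proof is correct and follows essentially the same strategy as the paper's: bound $N_\delta$ above by $2^k$ using the covering $\mathcal C(r)\subset\mathcal C_k(r)$, and below by $2^k$ using that the $2^k$ components of $\mathcal C_k(r)$ each contain points of $\mathcal C(r)$ and are too far apart to be hit simultaneously by a short interval. Two minor stylistic differences: you work directly with the paper's stated definition $\dim_M\Theta=\inf\{\beta:\limsup N_\delta\delta^\beta=0\}$, whereas the paper switches to the equivalent formula $\lim\log N_\delta/\log(1/\delta)$; and your lower bound makes the separation argument explicit via the minimal gap $(1-2r)r^{k-1}$, which is cleaner than the paper's somewhat terse justification.
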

\begin{proof}
Since $\C_k(r)$ consists of $2^k$ disjoint intervals of length $r^k$, we have $N_{r^k}(\C(r))\leq N_{r^k}(\C_k(r))=2^k$ from which it follows that
\begin{align*}
\limsup_{k\to \infty}\frac{\log N_{r^k}(\C(r))}{-\log1/r^k}
\leq
\limsup_{k\to\infty}\frac{\log 2^k}{\log1/r^k}
=
\frac{\log 2}{\log1/r}.
\end{align*}
On the other hand, by recalling the construction, one can find, at least, a point in $\C_k(r)\cap \C(r)$ which is not covered by any $2^k$ intervals, where the length of each interval is $\delta$ satisfying $r^{k+1}\leq\delta < r^{k}$.
Therefore, $N_\delta(\C(r))\geq 2^k$ holds, and 
\begin{align*}
\liminf_{\delta\to 0}\frac{\log N_\delta(\C(r))}{-\log \delta}
\geq
\liminf_{k\to\infty}\frac{\log 2^k}{\log1/r^{k+1}}
=
\frac{\log 2}{\log 1/r}.
\end{align*}
Hence, the limit exists and equals
\begin{equation}
\dim_M \C(r)
=
\lim_{\delta\to 0}\frac{\log N_\delta(\C(r))}{-\log \delta}
=
\frac{\log 2}{\log 1/r}.
\end{equation}

\end{proof}

\section{}
In this section, we note the useful lemmas for the sufficiency in Section \ref{s:fractal}.

\begin{lemma}[van der Corput's lemma]\label{lem:van der Corput}
Let $a,b$ be real numbers with $a<b$, $\phi$ be a sufficiently smooth real-valued function, and $\psi$ be a bounded smooth complex-valued function. Suppose that $|\phi^{(k)}(\xi)|\geq1$ for all $\xi \in [a,b]$. If $k=1$ and $\phi'(\xi)$ is monotonic on $(a,b)$, or simply $k\geq2$, then there exists a constant $C_k$ such that
\[
\left|\int_a^be^{i\lambda\phi(\xi)}\psi(\xi)\,\d\xi\right|\leq C_k\lambda^{-\frac1k}\left(\|\psi'\|_{L^{1}[a,b]}+\|\psi\|_{L^{\infty}[a,b]}\right)
\]
for all $\lambda > 0$.
\end{lemma}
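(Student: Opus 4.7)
The plan is to prove the estimate in three stages: first the base case $k=1$ with $\psi\equiv 1$, then the inductive step for $k\ge 2$ with $\psi\equiv 1$, and finally to remove the simplifying assumption on $\psi$ via a second integration by parts. Throughout, I would rely on nothing more than elementary calculus and the observation that $|\phi^{(k)}|\ge 1$ transfers to any subinterval.

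For the base case $k=1$, since $\phi'$ is monotonic and $|\phi'|\ge 1$ on $[a,b]$, I would write $e^{i\lambda\phi}=\frac{1}{i\lambda\phi'}\frac{\d}{\d\xi}e^{i\lambda\phi}$ and integrate by parts. The boundary terms are each at most $1/\lambda$, and the remainder is bounded by $\lambda^{-1}$ times the total variation of $1/\phi'$, which by monotonicity equals $|1/\phi'(b)-1/\phi'(a)|\le 2$. For the inductive step, assuming the conclusion for exponent $k-1$, I would fix $\delta>0$ and split $[a,b]$ according to whether $|\phi^{(k-1)}(\xi)|<\delta$ or not. Since $|\phi^{(k)}|\ge 1$ with $\phi^{(k)}$ continuous, $\phi^{(k)}$ does not change sign, and $\phi^{(k-1)}$ is strictly monotonic; hence $E_\delta:=\{|\phi^{(k-1)}|<\delta\}$ is a single subinterval of length at most $2\delta$, contributing at most $2\delta$ to the integral. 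The complement consists of at most two intervals on which $|\phi^{(k-1)}|\ge\delta$, so the rescaled phase $\phi/\delta$ satisfies $|(\phi/\delta)^{(k-1)}|\ge 1$; the inductive hypothesis applied to each piece with parameter $\lambda\delta$ yields
\[
\left|\int_a^b e^{i\lambda\phi(\xi)}\,\d\xi\right|\lesssim \delta + (\lambda\delta)^{-1/(k-1)}.
\]
Optimizing by taking $\delta\sim\lambda^{-1/k}$ produces the claimed rate $\lambda^{-1/k}$.

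To upgrade to general $\psi$, I would set $J(\xi):=\int_a^\xi e^{i\lambda\phi(s)}\,\d s$; the previous steps applied on $[a,\xi]$ yield $\sup_{\xi\in[a,b]}|J(\xi)|\le C_k\lambda^{-1/k}$. A further integration by parts
\[
\int_a^b e^{i\lambda\phi(\xi)}\psi(\xi)\,\d\xi=J(b)\psi(b)-\int_a^b J(\xi)\psi'(\xi)\,\d\xi
\]
then gives the stated inequality with the factor $\|\psi\|_{L^\infty[a,b]}+\|\psi'\|_{L^1[a,b]}$. The main technical obstacle is the inductive step: one must verify carefully that $E_\delta$ is a single interval (using that $|\phi^{(k)}|\ge 1$ together with the continuity of $\phi^{(k)}$ force the monotonicity of $\phi^{(k-1)}$, which explains why the monotonicity hypothesis in the statement is needed only at $k=1$), and that after the rescaling $\phi\mapsto\phi/\delta$ the induction hypothesis transfers cleanly to each outer piece. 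The balancing of the two competing contributions $\delta$ and $(\lambda\delta)^{-1/(k-1)}$ is what produces the sharp exponent $1/k$.
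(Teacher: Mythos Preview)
Your argument is correct and is precisely the classical proof of van der Corput's lemma as found in Stein's \emph{Harmonic Analysis}. The paper does not give its own proof of this lemma; it simply cites \cite{St94}, so your write-up coincides with what the paper defers to.
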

For a proof of Lemma \ref{lem:van der Corput}, we refer the reader to \cite{St94}. Next we introduce a Young convolution/Hardy--Littlewood--Sobolev type-inequalities, which generalizes lemmas in \cite{Sh19,CS20}.

\begin{lemma}\label{l:Young and HLS-type}
Let $0<\alpha\le1$, \textcolor{black}{$q\geq2$} and $\mu$ be an $\alpha$-dimensional measure. There exists a constant $C$ such that for any interval $[a,b]$ in $\mathbb R$
it holds that
\begin{align}\label{i:Young special case}
&\left|\iint\iint g(x,t)h(x',t')\chi_{[a,b]}(x-x')\,\d \mu(x)\d t\d \mu(x')\d t'\right|\\
&\qquad \qquad \qquad \qquad \qquad \qquad \qquad \qquad \qquad\le C(b-a)^{\frac{2\alpha}{q}}\|g\|_{L^{q'}_x(\d\mu)L^1_t}\|h\|_{L^{q'}_x(\d\mu)L^1_t}.\nonumber
\end{align}
Moreover, if \textcolor{black}{$0<\frac{q\rho}{2}<\alpha$} then there exists a constant $C$ such that 
\begin{align}\label{i:HLS-type}
&\left|\iint\iint g(x,t)h(x',t')|x-x'|^{-\rho}\,\d \mu(x)\d t\d \mu(x')\d t'\right| \le C\|g\|_{L^{q'}_x(\d\mu)L^1_t}\|h\|_{L^{q'}_x(\d\mu)L^1_t}.
\end{align}
Here, the both integrals are taken over $(x,t)$, $(x',t')\in \mathbb I\times \mathbb I$.
\end{lemma}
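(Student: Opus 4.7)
The plan is to peel off the time variables by Fubini, reducing both estimates to purely spatial bilinear bounds against $\mu$; the first is then obtained by combining Schur's test with Riesz--Thorin interpolation, and the second is reduced to the first by a dyadic decomposition of the Riesz-type kernel. Setting $G(x):=\|g(x,\cdot)\|_{L^1_t}$ and $H(x'):=\|h(x',\cdot)\|_{L^1_{t'}}$, one has $\|G\|_{L^{q'}(\d\mu)}=\|g\|_{L^{q'}_x(\d\mu)L^1_t}$ (and similarly for $H$), and since neither $\chi_{[a,b]}(x-x')$ nor $|x-x'|^{-\rho}$ depends on $t$ or $t'$, Fubini reduces both left-hand sides to
\[
\iint G(x)H(x')K(x,x')\,\d\mu(x)\,\d\mu(x'),
\]
with $K=\chi_{[a,b]}(x-x')$ or $K=|x-x'|^{-\rho}$. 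It remains to bound this bilinear form by the advertised product of $L^{q'}(\d\mu)$ norms.

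For \eqref{i:Young special case}, I would analyse the kernel operator $TG(x'):=\int G(x)\chi_{[a,b]}(x-x')\,\d\mu(x)$. Two endpoint bounds are easy: trivially $\|T\|_{L^1(\d\mu)\to L^\infty(\d\mu)}\le 1$ since $\chi_{[a,b]}\le 1$, and $\|T\|_{L^2(\d\mu)\to L^2(\d\mu)}\le C(b-a)^\alpha$ by Schur's test, whose hypotheses
\[
\sup_{x'}\int\chi_{[a,b]}(x-x')\,\d\mu(x)\le C(b-a)^\alpha,\qquad \sup_{x}\int\chi_{[a,b]}(x-x')\,\d\mu(x')\le C(b-a)^\alpha,
\]
follow directly from the $\alpha$-dimensional property $\mu(B(y,r))\le Cr^\alpha$ applied to the interval of length $b-a$ picked out by $\chi_{[a,b]}$. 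Riesz--Thorin interpolation between the two endpoints, with parameter $\theta=2/q$, then yields $\|T\|_{L^{q'}(\d\mu)\to L^q(\d\mu)}\le C(b-a)^{2\alpha/q}$, and Hölder's inequality gives $|\langle TG,H\rangle|\le C(b-a)^{2\alpha/q}\|G\|_{L^{q'}(\d\mu)}\|H\|_{L^{q'}(\d\mu)}$ as required.

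For \eqref{i:HLS-type}, I would use that $x,x'\in\mathbb I$ forces $|x-x'|\le 2$, and decompose $|x-x'|^{-\rho}\lesssim\sum_{j\le 1}2^{-j\rho}\chi_{\{|x-x'|\sim 2^j\}}$. Each annular indicator is dominated by $\chi_{[-2^j,2^j]}(x-x')$, so applying \eqref{i:Young special case} with $b-a\sim 2^j$ on every scale and summing yields
\[
\iint GH\,|x-x'|^{-\rho}\,\d\mu\,\d\mu \lesssim \Bigl(\sum_{j\le 1}2^{j(2\alpha/q-\rho)}\Bigr)\|G\|_{L^{q'}(\d\mu)}\|H\|_{L^{q'}(\d\mu)}.
\]
The geometric series converges precisely because the hypothesis $q\rho/2<\alpha$ is equivalent to $2\alpha/q-\rho>0$. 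No substantive obstacle is anticipated: the only genuine input is the $\alpha$-dimensional upper bound $\mu(B(y,r))\le Cr^\alpha$, which feeds Schur at the $L^2$ endpoint, and the Riesz--Thorin exponent $(b-a)^{2\alpha/q}$ is precisely the scaling that makes the dyadic sum in the second part converge under the natural threshold $\rho<2\alpha/q$, so the two halves of the lemma are tightly aligned.
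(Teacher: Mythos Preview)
Your proof is correct, and for the second inequality~\eqref{i:HLS-type} your dyadic decomposition matches the paper's argument essentially verbatim. For the first inequality~\eqref{i:Young special case}, however, you take a genuinely different route: the paper argues directly by applying H\"older in the inner integral (splitting $\chi_{[a,b]}=\chi_{[a,b]}^{1/q}\cdot\chi_{[a,b]}^{1/q'}$ to extract one factor of $(b-a)^{\alpha/q}$), then swapping the two $\mu$-integrals via Minkowski's integral inequality (using $q/q'\ge1$) to extract the second factor of $(b-a)^{\alpha/q}$. Your approach instead establishes the endpoint bounds $L^1(\d\mu)\to L^\infty(\d\mu)$ (trivial) and $L^2(\d\mu)\to L^2(\d\mu)$ (Schur), and interpolates. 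Both methods rely on the same single structural input, namely $\mu([x'+a,x'+b])\lesssim(b-a)^\alpha$; the paper's argument is slightly more elementary in that it avoids invoking Riesz--Thorin, while yours makes the role of the endpoint $q=2$ more transparent and would adapt more readily if one wanted other interpolated estimates.
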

This lemma is a generalization of Lemma 4 in \cite{Sh19} and Lemma 7 in \cite{CS20}.

\begin{proof}
In order to show \eqref{i:Young special case}, it is enough to show that 
\begin{equation*}
\|g\ast_\mu\chi_{[a,b]}\|_{L^q(\d\mu)}\lesssim(b-a)^{\frac{2\alpha}{q}}\|g\|_{L^{q'}(\d\mu)}.
\end{equation*}
By applying H\"older's inequality with $\frac1q+\frac{1}{q'}=1$,
\begin{align*}
&\left(\int\left|\int g(x')\chi_{[a,b]}(x-x')\,\d\mu(x')\right|^q\,\d\mu(x)\right)^\frac1q\\
& \qquad \qquad \leq
(b-a)^\frac\alpha q\left(\int\left|\int |g(x')|^{q'}\chi_{[a,b]}(x-x')\,\d\mu(x')\right|^\frac{q}{q'}\,\d\mu(x)\right)^\frac1q,
\end{align*}
which is further bounded, as a result of Minkowski's inequality since $\frac{q}{q'}\geq1$, from above by 
\begin{align*}
(b-a)^\frac\alpha q\left(\int\left(\int |g(x')|^{\textcolor{black}{q}}\chi_{[a,b]}(x-x')\,\d\mu(x)\right)^{\textcolor{black}{\frac{q'}{q}}}\,\d\mu(x')\right)^\frac{1}{q'}
\sim
(b-a)^\frac{2\alpha}{q}\|g\|_{L^{q'}(\d\mu)}.
\end{align*}

Then, showing \eqref{i:HLS-type} is rather easy via \eqref{i:Young special case} as follows:
\begin{align*}
&\left|\iint\iint g(x,t)h(x',t')|x-x'|^{-\rho}\,\d \mu(x)\d t\d \mu(x')\d t'\right|\\
& \qquad \qquad \lesssim\sum_{j=0}^\infty2^{\rho j}\iint G(x)H(x')\chi_{[2^{-j},2^{-j+1}]}(x-x')\,\d\mu(x)\d\mu(x')\\
& \qquad \qquad\lesssim\sum_{j=0}^\infty 2^{(\rho-\frac{2\alpha}{q})j}\|G\|_{L^{q'}_x(\d\mu)}\|H\|_{L^{q'}_x(\d\mu)}\\
& \qquad\qquad\lesssim\|G\|_{L^{q'}_x(\d\mu)}\|H\|_{L^{q'}_x(\d\mu)}
\end{align*}
whenever $\rho-\frac{2\alpha}{q}<0$.
\end{proof}

\end{appendix}

\subsection*{Acknowledgment} 
The authors would like to thank Satoshi Masaki and Yutaka Terasawa for the great opportunity at RIMS in the summer of 2021 and their kind hospitality. The second author also wishes to thank Neal Bez, Haruya Mizutani, and Shohei Nakamura for several constructive discussions related to some of this work. Finally,
we thank the anonymous referee for their helpful suggestions.


\end{document}